\documentclass[12pt,a4paper]{amsart}
\usepackage{amssymb,amsmath,amsfonts}
\usepackage[usenames,dvipsnames,svgnames,table]{xcolor}
\usepackage{amsmath,amsfonts,amsthm}
\usepackage{amssymb,latexsym}
\usepackage{setspace}
\usepackage{ams thm,ams symb,amsmath}
\usepackage[all]{xy}
 \usepackage{graphicx}
 \usepackage{enumerate}
\theoremstyle{plain}
\usepackage[square,numbers]{natbib}
\headsep=1.2500cm \numberwithin{equation}{section}
\newtheorem{Theorem}{Theorem}[section]
\newtheorem{Proposition}[Theorem]{Proposition}
\newtheorem{cor}[Theorem]{Corollary}
\newtheorem{Lemma}[Theorem]{Lemma}
\theoremstyle{remark}
\newtheorem{Definition}[Theorem]{Definition}
\newtheorem{Example}[Theorem]{Example}

\newtheorem{Remark}[Theorem]{Remark}

\begin{document}

\title[essential contractibility of  
 Banach algebras]{essential   contractibility of \\ 
 Banach algebras}

 \author[B. S. Mortazavi-Samarin]{Batoul S. Mortazavi-Samarin}
 \address{Department of Mathematics and Computer Science, Amirkabir University of Technology (Tehran
Polytechnic), Iran.}
  \email{bmortazavi@aut.ac.ir}

 \author[M. Rostami]{Mehdi Rostami}
 \address{Department of Mathematics and Computer Science, Amirkabir University of Technology (Tehran
Polytechnic), Iran.}
  \email{mross@aut.ac.ir}

\author[A. Sahami]{Amir Sahami}
\address{Faculty of Basic Sciences, Department of Mathematics, Ilam University, P.O.Box 69315-516, Ilam,
	Iran.}
	\email{a.sahami@ilam.ac.ir}
	
\keywords{essential contractibility,extension, ideal, symmetric abstract Segal algebra}

\subjclass[2010]{Primary 43A20, 46H25, Secondary 46H05.}

\maketitle
%%%%%%%%%%%%%%%%%%%%%%%%%%%%%%%%%%%%%%%%%%%%%%%%%%%%%%%%%%%%%%%%%%%%%%%%%%%%%%%%%%%%%%%%%%%%%%%%%%%%%%%%%%%%%%%%%%%%%%%%%%%%%%%%%%%%%%%%%%%%%%%%%%%%%%%%%%
\begin{abstract}
In this paper, a new notion, essential contractibility of Banach algebras, is introduced and some of its properties are examined.	The main result is to investigate the essential contractibility of (symmetric abstract) Segal algebras. 
\end{abstract}
%%%%%%%%%%%%%%%%%%%%%%%%%%%%%%%%%%%%%%%%%%%%%%%%%%%%%%%%%%%%%%%%%%%%%%%%%%%%%%%%%%%%%%%%%%%%%%%%%%%%%%%%%%%%%%%%%%%%%%%%%%%%%%%%%%%%%%%%%%%%%%%%%%%%%%%%%%
\section{Introduction}
One of the basically cohomological Banach algebras is contractible Banach algebras. 
A Banach algebra $A$ is said to be contractible when every bounded derivation $D:A\rightarrow X$ is inner for every Banach $A$-bimodule $X$. Alternatively, $A$ is contractible if it possesses a diagonal, i.e., an element $u\in A\widehat{\otimes}A$ satisfying $au=ua$ and $\pi_A(u)a=a$ for all $a\in A$. Curtis and Loy showed in \cite{cl} that any commutative contractible Banach algebra must be finite dimensional. However, finding an example of an infinite-dimensional contractible Banach algebra remains an open challenge.
Another key homological notion in Banach algebras is amenability which was first introduced
and studied by Johnson \cite{Johnson}. A stronger notion that amenability, known as essential amenability, was later proposed by Ghahramani and Loy in \cite{Ghahramani.Loy}.
They defined a Banach algebra $A$ as essentially
amenable if $\mathcal{H}^1(A,{X}^\ast) = \{0\}$, where ${X}$ is  a
neo-unital Banach $A$-bimodule, i.e., every continuous derivation
from A into the dual of every Banach $A$-bimodule ${X} = A\cdot {X}
\cdot A,$ is inner. A criterion for
the essential amenability of symmetric Segal algebras under certain
conditions was established by Ghahramani and Loy \cite{Ghahramani.Loy}. Building on this, Samea extended these ideas to the broader setting involving abstract Segal algebras in
\cite{Samea}.

The paper is organized as follows. Section \ref{sec2} is devoted to
introducing the concept of essential contractibility of Banach algebra, along with relevant examples. Furthermore, infinite-dimensional examples of essentially contractible Banach algebras are presented to highlight the significance and potential of this concept for further investigation. In section \ref{sec3}, some hereditary properties of essential contractibility are examined.
In section \ref{sec4}, the essential contractibility of Segal Banach algebra is investigated.
In addition, several criteria for the essential contractibility of
symmetric Segal  Banach algebras are provided.
%%%%%%%%%%%%%%%%%%%%%%%%%%%%%%%%%%%
%%%%%%%%%%%%%%%%%%%%%%%%%%%%%%%%%%%
%%%%%%%%%%%%%%%%%%%%%%%%%%%%%%%%%%%
\section{Essential contractibility}\label{sec2}
Let $A$ be a Banach algebra. For $n\in \mathbb{N},$ define $${A^n
	=\left\{{\prod}_{i=1}^n a_1\dots a_n : a_i\in A, 1\leq i \leq n\right\}}.$$ We denote by ${[A^n]}$ the linear span of $A^n$. 
A Banach  $A$-bimodule ${X}$ is called essential if $X =\overline{[A\cdot X\cdot A]}$
where $A\cdot X\cdot A=\{a\cdot x \cdot b :a,b\in A,x\in {X} \}$. Additionally, $X$
is said to be neo-unital if $X=A\cdot X\cdot A$. According to the Cohen factorization theorem \cite[Theorem
32.22]{Hewitt.Ross}, every essential  Banach $A$-bimodule $X$ is
neo-unital whenever $A$ has a bounded approximate identity.

Now, Let $A$ be a Banach algebra and let ${X}$ be a Banach $A$-bimodule.  A continuous linear map $D:A\rightarrow {X}$ is called a derivation if it satisfies
$$D(ab)=a\cdot D(b)+D(a)\cdot b,$$
for all $a, b\in A$.
For a given $x\in X$, define a continuous linear map ${ad}_x:A\rightarrow X$ by
$${{ad}}_{x}(a)=a\cdot x-x\cdot a, \quad (a\in A).$$
This map, known as an inner derivation, satisfies the derivation property.
\begin{Definition}
	A Banach algebra $A$ is essentially contractible if
	for every neo-unital Banach $A$-bimodule $X$, every derivation $D:A\rightarrow X$ is inner.
\end{Definition}
Let $A$ be a Banach algebra and let $\Phi(A)$ represent the character space of
$A$, consisting of all non-zero multiplicative linear functionals on $A$. For $\varphi\in\Phi(A)$, a Banach $A$-bimodule
$X$ is called left $\varphi$-linked if the left module action is
induced by $\varphi$; i.e., $$a \cdot x = \varphi(a)x,$$ for all $a\in A$ and $x\in X$.
Such an $A$-bimodule is denoted by $_{\varphi}X.$ In the same manner,
a right $\varphi$-linked Banach $A$-bimodule is defined and it is denoted by $X_{\varphi}.$
We also use $_{\varphi}X_{\varphi}$ to indicate an $A$-bimodule that is both left and right $\varphi$-linked.
\begin{Remark}\label{7}
	Let $A$ and $\mathcal X$ be a Banach algebras and let $\varphi \in \Phi(A)$ be a fixed
	character. The Banach algebra
	$_{\varphi}{\mathcal{X}}_{\varphi}$ becomes a neo-unital
	$A$-bimodule. Specifically, for some $b\in A$ with $\varphi(b)\neq0$, we have $x = \dfrac{b}{\varphi(b)} \cdot x \cdot
	\dfrac{b}{\varphi(b)}$, for every $x\in{_{\varphi}{\mathcal{X}}_{\varphi}}$. Any continuous derivation from $A$ into
	$_\varphi\mathbb{C}_\varphi$ is referred to as a point derivation. This implies that every essentially contractible Banach algebra has no non-zero
	point derivation. This criterion provides us with numerous examples of Banach algebras that are not essentially contractible.
	As an example, consider the disc algebra $A(\overline{\Bbb D})$, defined as 
	$$A(\overline{\Bbb D})=\{f\in C(\overline{\Bbb D}): f|_{\Bbb D}~{\rm is~analytic}\},$$
	where $\Bbb D$ denotes the open unit disc in $\Bbb C$.
	For a point $z_0$ on the boundary of $\bar{\Bbb D}$,
	and the character $\varphi(f)=f(z_0)$, the linear functional $D:A(\overline{\Bbb D})\rightarrow A(\overline{\Bbb D})$ given by $D(f)=f^{\prime}(z_0)1$
	is a point derivation on $A(\overline{\Bbb D})$, where $f^{\prime}(z_0)$ is understood in the sense of the derivative of $f$ at $z_0$.
	\end{Remark}
%%%%%%%%%%%%%%%%%%%%%%%%%%%%%%%%%%%%%%%%%%%%%%%%%%%%%%%%%%%%%%%%%%%%%%%%%%%%%
For a Banach algebra $A$,
the projective tensor product of $A$ with itself is denoted by $A\widehat{\otimes}A$ and the diagonal operator from $A\widehat{\otimes}A$
into $A$ is defined by $\pi_A(a\otimes b)=ab$.
It is obvious that every contractible Banach algebra is essentially contractible. In the following, it shown that the essential
contractibility with the condition of unitality is the same as the contractibility.
\begin{Theorem}\label{31}
	Every essentially contractible Banach algebra with a bounded central approximate identity is pseudo-contractible.
\end{Theorem}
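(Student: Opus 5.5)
Recall that $A$ is pseudo-contractible if there is a central approximate diagonal: a net $(m_\alpha)\subseteq A\widehat{\otimes}A$ with $a\cdot m_\alpha=m_\alpha\cdot a$ for all $a\in A$ and all $\alpha$, and $\pi_A(m_\alpha)a\to a$ for every $a\in A$. The plan is to manufacture, for each index of a bounded central approximate identity $(e_\alpha)$, an element $m_\alpha\in A\widehat{\otimes}A$ that commutes with $A$ and satisfies $\pi_A(m_\alpha)\approx e_\alpha^2$. Each $m_\alpha$ will come from an inner derivation on a neo-unital module, where essential contractibility applies.

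\textbf{Step 1: the module.} Take the essential part
$$X=\overline{[A\cdot (A\widehat{\otimes}A)\cdot A]}.$$
Since $A$ has a bounded approximate identity, the Cohen factorization theorem gives that $X$ is neo-unital. Form the closed submodule $K=X\cap\ker\pi_A$, which is again essential, hence neo-unital, and is a Banach $A$-bimodule.

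\textbf{Step 2: the derivation.} Fix $\alpha$ and set $u_\alpha=e_\alpha\otimes e_\alpha\in X$, which lies in $X$ after writing $e_\alpha=\lim e_\beta e_\alpha$. Define
$$D_\alpha(a)=a\cdot u_\alpha-u_\alpha\cdot a .$$
This is a derivation into $X$. Because $e_\alpha$ is central, $\pi_A(D_\alpha(a))=ae_\alpha^2-e_\alpha^2a=0$, so $D_\alpha$ maps into $K$.

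\textbf{Step 3: a commuting element.} By essential contractibility there is $n_\alpha\in K$ with $D_\alpha=ad_{n_\alpha}$. Then $m_\alpha=u_\alpha-n_\alpha$ satisfies $a\cdot m_\alpha=m_\alpha\cdot a$ for all $a\in A$. Moreover
$$\pi_A(m_\alpha)=\pi_A(u_\alpha)=e_\alpha^2,$$
and $e_\alpha^2a\to a$ by boundedness of $(e_\alpha)$. Hence $(m_\alpha)$ is a central approximate diagonal, and $A$ is pseudo-contractible.

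\textbf{Main obstacle.} The delicate point is ensuring $K$ is genuinely neo-unital, so that the definition of essential contractibility applies to it.

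For $k\in K$ we have $e_\beta\cdot k\cdot e_\beta\to k$. This holds because $X$ is the essential part of $A\widehat{\otimes}A$, where the bounded approximate identity acts as an approximate identity on both sides. So $K$ is essential. Cohen's factorization theorem for Banach bimodules (applied to left and right actions successively, noting $\pi_A$ is a bimodule map so $K$ is a submodule) then gives $K=A\cdot K\cdot A$.

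If this is problematic, the fallback is to work directly in $X$. Essential contractibility of $X$ gives some $n_\alpha\in X$. One then corrects $\pi_A(n_\alpha)$, which is central-like since $\pi_A\circ ad_{n}=0$ forces $\pi_A(n_\alpha)$ to commute with $A$, by subtracting $e_\alpha\otimes \pi_A(n_\alpha)e_\alpha$-type terms, using centrality of $e_\alpha$.
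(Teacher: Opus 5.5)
Your argument is correct and is essentially the paper's proof: the paper also shows that $\ker\pi_A$ is neo-unital by Cohen factorization, and writes ${ad}_{e_\alpha\otimes e_\alpha}={ad}_{N_\alpha}$ with $N_\alpha\in\ker\pi_A$ (the range of this derivation lies in $\ker\pi_A$ because $e_\alpha$ is central); it then takes $(e_\alpha\otimes e_\alpha-N_\alpha)$ as the central approximate diagonal. Your $X$ is already all of $A\widehat{\otimes}A$, since $e_\beta\cdot u\cdot e_\beta\to u$ for every $u\in A\widehat{\otimes}A$, so $K=\ker\pi_A$ and the fallback paragraph is unnecessary.
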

\begin{proof}
	Let $(e_{\alpha})$ be the bounded central approximate identity of $A$. Consider the kernel of the diagonal map $\pi_A$, denoted by $\ker\pi_A\subseteq A\widehat{\otimes}A$, and equip it with the usual Banach $A$-bimodule structure.
	Since $a\otimes b=\lim_{\alpha}e_{\alpha}(a\otimes b)e_{\alpha}$, for every $a\otimes b\in\ker\pi_A$, it follows from the Cohen factorization theorem that $\ker\pi_A$ is
	a neo-unital Banach $A$-bimodule.
	We set $D_{\alpha}={ad}_{ M_{\alpha}}:A\rightarrow A\widehat{\otimes}A$, where $M_{\alpha}:=e_{\alpha}\otimes e_{\alpha}.$ Thus $\text{Im}(D)\subseteq\ker\pi_A$.
	This implies the existence of $N_{\alpha}\in\ker\pi_A$ such that ${ad}_{M_{\alpha}}={ad}_{ N_{\alpha}}$.
	Therefore, $(M_{\alpha}- N_{\alpha})$ forms a central approximate diagonal for $A$, as it satisfies $$a \cdot (M_{\alpha}-N_{\alpha})=(M_{\alpha}-N_{\alpha})\cdot a \qquad{\rm and}\qquad a\cdot \pi_A(M_{\alpha}-N_{\alpha})\rightarrow a$$ for all $a\in A$.
\end{proof}
%\begin{Remark}
	%Let $G$ be a nondiscrete amenable locally compact group. If $A(G)$ is essentially contractible,
	%then by using Theorem \ref{31}, we can conclude that 
	%$A(G)$ is pseudo-contractible. However, this leads to a contradiction, as shown in \cite[Theorem 4.2]{sss}. Therefore, $A(G)$ cannot be essentially contractible.
	%\end{Remark}
Applying Theorem \ref{31} and \cite[Theorem 2.4]{gz} we deduce the
following result.
\begin{cor}\label{3}
	Every essentially contractible Banach algebra with unit is contractible.
\end{cor}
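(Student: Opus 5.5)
The plan is to follow the route the paper indicates: feed the unit into Theorem \ref{31}, then upgrade pseudo-contractibility to contractibility using \cite[Theorem 2.4]{gz}. As a safeguard I will also give a direct diagonal construction that avoids the citation.

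\textbf{Reduction through Theorem \ref{31}.} Let $A$ be essentially contractible with unit $e$. The constant net $e_\alpha = e$ is a bounded approximate identity, and it is central because $ae = ea = a$ for all $a\in A$. So Theorem \ref{31} applies and $A$ is pseudo-contractible, that is, $A$ has a central approximate diagonal $(u_\alpha)\subseteq A\widehat{\otimes}A$. By \cite[Theorem 2.4]{gz}, a pseudo-contractible Banach algebra with an identity is contractible, which finishes the argument.

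\textbf{Direct argument.} To keep the proof self-contained, I will also construct a diagonal explicitly, in the spirit of the proof of Theorem \ref{31}.

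\emph{Step 1: the module.} Take $X=\ker\pi_A$ with the usual bimodule actions. For $x\in X$ we have $x=e\cdot x\cdot e$, so $X=A\cdot X\cdot A$ and $X$ is neo-unital. No Cohen factorization is needed here.

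\emph{Step 2: the derivation.} Put $M=e\otimes e$ and $D={ad}_M$, which is a derivation into $A\widehat{\otimes}A$. Since $\pi_A$ is a bimodule map,
\[
\pi_A(a\cdot M - M\cdot a)=ae-ea=0,
\]
so $D$ maps into $X$.

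\emph{Step 3: essential contractibility.} Because $A$ is essentially contractible, there is $N\in X$ with ${ad}_M={ad}_N$.

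\emph{Step 4: the diagonal.} Set $u=M-N$. Then $a\cdot u=u\cdot a$ for all $a\in A$, and $\pi_A(u)=e-0=e$, so $\pi_A(u)a=a$. Thus $u$ is a diagonal for $A$, and $A$ is contractible.

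\textbf{Main point to check.} The only step needing care is that $D$ genuinely lands in $\ker\pi_A$ and that this kernel is neo-unital. Both follow immediately from the existence of the unit, so I expect no real obstacle.
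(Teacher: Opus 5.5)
Your proposal is correct, and your main argument is the paper's: the unit is a bounded central approximate identity, so Theorem~\ref{31} gives pseudo-contractibility, and \cite[Theorem 2.4]{gz} upgrades this to contractibility. Your direct argument is also valid; it is the proof of Theorem~\ref{31} run with the constant net $e_\alpha=e$, where $\pi_A(M-N)=e$ makes $M-N$ a genuine diagonal, so it removes the need for both the Cohen factorization step and the external citation.
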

A direct consequence of the above corollary is that the discrete
group algebra $\ell^1(G)$ is essentially contractible if and only if
$G$ is finite.
\begin{Remark}
	Consider the semigroup $S=(\Bbb N,\max)$. Then, if $\ell^1(S)$ is essentially
	contractible, then by using Corollary \ref{3}, $\ell^1(S)$ is
	contractible. This implies that $\ell^1(S)$ is amenable and consequently the semigroup $S$
	has finitely many idempotents \cite[Theorem 2]{dp}, that is a contradiction. Therefore,
	$\ell^1(S)$ is not essentially contractible.
\end{Remark}
%%%%%%%%%%%%%%%%%%%%%%%%%%%%%%%%%%%%%%%%%%%%%%%%%%%%%%%%%%%%%%%%%%%%%%%%%%%%%
In the sequel, some examples that distinguish the essential contractibility from the other concepts of cohomological notions are presented.
\subsection*{Examples}
\noindent 
\begin{Example}
	An example of  (infinite dimensional) Banach algebras that is essentially contractible but not contractible.
\end{Example}
Let $A$ be a nonzero Banach space. By defining the product $a\cdot b=0$ for all
$a,b\in A,$ every neo-unital Banach A-bimodule becomes a singular zero
set. In fact, $X = A\cdot X\cdot A=A\cdot(A\cdot X\cdot A)\cdot A
=\{0\}$. This shows that $A$ is essentially contractible. However, $A$ is not contractible, as it does not have a unit element.
\begin{Example}\label{21}
	An example of a Banach algebra that is essentially contractible but not amenable (contractible).
\end{Example}
Let $S$ be a nonempty set with $|S|\geq2$. 
\begin{enumerate}[a)]
\item
For fixed an element $s_0\in S$, define a semigroup structure on $S$ using the multiplication
$st=s_0$ for all $s, t\in S$. We show that $\ell^1(S)$  is essentially
contractible. Let $X$ be a neo-unital Banach $\ell^1(S)$-bimodule and $D
:\ell^1(S)\rightarrow X$ be a continuous derivation.
It is straightforward to see that $f\ast g=(\sum_{st=s_0}f(s)g(t))\delta_{s_0}=\Bbb C\delta_{s_0}$, for all $f, g\in\ell^1(S)$.  Since $X$ is neo-unital we get
$$X=\ell^1(S)X\ell^1(S)=\ell^1(S)(\ell^1(S)X\ell^1(S))\ell^1(S)=\Bbb C\delta_{s_0}X\Bbb C\delta_{s_0}=\delta_{s_0}X\delta_{s_0}.$$
Let $f\in\ell^1(S)$ and $x\in X$. Then for some $y\in X$ we have $x=\delta_{s_0}y\delta_{s_0}$, leading to
\begin{equation*}
		\begin{split}
			f\cdot x&=f.(\delta_{s_0}y\delta_{s_0})=(f\ast\delta_{s_0})y\delta_{s_0}\\&=(\sum_{s\in S}f(s))x=\delta_{s_0}y(\delta_{s_0}\ast f)\\&=(\delta_{s_0}y\delta_{s_0})\cdot f=x\cdot f.
		\end{split}
	\end{equation*}

Specifically, $\delta_{s_0}\cdot x=x\cdot\delta_{s_0}=x$. Additionally, we find $$D(\delta_{s_0})=D(\delta_{s_0}\ast\delta_{s_0})=\delta_{s_0}D(\delta_{s_0})+D(\delta_{s_0})\delta_{s_0}=2D(\delta_{s_0}),$$
which implies $D(\delta_{s_0})=0$. Consequently,
$D(f\ast\delta_{s_0})=D(f)\delta_{s_0}+fD(\delta_{s_0})=D(f)$. On the other hand, $D(f\ast\delta_{s_0})=(\sum_{s\in S}f(s))D(\delta_{s_0})=0$ and thus $D=0$.
This shows that $\ell^1(S)$ is essentially contractible. However, $\ell^1(S)$ is not amenable, as it lacks a bounded approximate identity.
\item
An element $z$ in  $S$ is called  left (right) zero if $zs = z~ (sz = z)$ for all $s \in S.$ If every member of S is a left (right) zero, then S is called a left (right) zero semigroup. A left zero semigroup that is also a right zero semigroup is called a zero semigroup. If S has a zero 0 and st = 0 for all $s, t \in S,$ then $S$ is called a null semigroup. Suppose that $S$ is a topological null semigroup. By \cite[Theorem 3.4]{BS1} $M(S)$ does not have a left or a right approximate identity. We show that $M(S)$   is essential contractible. Let $ X $ be a neo-unital $ M(S)$-bimodule and $D :M(S)\rightarrow X$ be a continuous  derivation. So $D = 0.$ In fact, by \cite[Lemma 3.3]{BS1} we have
$$\mu(S)D(\delta_0) = D(\mu(S)\delta_0) = D(\mu \ast  \delta_0)	= D(\mu).\delta_0 + \mu.D(\delta_0)
= D(\mu) + \mu(S)D(\delta_0).$$
for all $\mu \in M(S)$.
\end{enumerate}
\begin{Example}
An essentially amenable which is not essentially contractible.  
\end{Example}
Let $G$ be a non-compact amenable  locally compact group. The group algebra $L^1(G)$ is essentially  amenable \cite[Corollary 7.1]{Ghahramani.Loy}. These group algebras can not be essentially  contractible. In fact, if $L^1(G)$ is essentially  contractible, then it is  already left $ \varphi$-essentially  contractible for every $\varphi \in \Phi(L^1(G))\cup \{0\}.$ So $G$ must be compact \cite[Theorem 2]{Sahami}.
$ {} $\\
%%%%%%%%%%%%%%%%%%%%%%%%%%%%%%%%%%%%%%%%%%%%%%%%%%%%%%%%%%%%%%%%%%%%%%%%%%%%%
Further examples will be introduced progressively in the following sections.
%%%%%%%%%%%%%%%%%%%%%%%%%%%%%%%%%%%%%%%%%%%%%%%%%%%%%%%%%%%%%%%%%%%%%%%%%%%%%%%%%%%%%%%%%%%%%%%%%%%%%%%%%%%%%%%%%%%%%%%%%%%%%%%%%%%%%%%%%%%%%%%%%%%%%%%%%%
\section{Properties of essential contractibility}\label{sec3}
We will now establish some basic properties of essential contractibility.
\begin{Proposition}\label{11}
	Let $A$ be an essentially contractible Banach algebra and
	$\Psi:A\rightarrow B$ be a continuous epimorphism. Then $B$ is
	essentially contractible.
\end{Proposition}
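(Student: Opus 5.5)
The plan is to pull back modules and derivations along $\Psi$. Let $X$ be a neo-unital Banach $B$-bimodule and let $D:B\rightarrow X$ be a continuous derivation. We make $X$ into a Banach $A$-bimodule by setting
$$a\cdot x=\Psi(a)\cdot x,\qquad x\cdot a=x\cdot\Psi(a)\qquad (a\in A,\ x\in X).$$
These actions are bounded because $\Psi$ is continuous, and the bimodule axioms follow from the multiplicativity of $\Psi$. We then put $\tilde D=D\circ\Psi:A\rightarrow X$. This map is continuous and linear. It is also a derivation, since
$$\tilde D(ab)=D(\Psi(a)\Psi(b))=\Psi(a)\cdot D(\Psi(b))+D(\Psi(a))\cdot\Psi(b)=a\cdot\tilde D(b)+\tilde D(a)\cdot b.$$

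The step needing care is that $X$ is neo-unital as an $A$-bimodule. Here surjectivity of $\Psi$ is essential. Given $x\in X$, neo-unitality over $B$ gives $x=b\cdot y\cdot c$ with $b,c\in B$ and $y\in X$. Choose $a,a'\in A$ with $\Psi(a)=b$ and $\Psi(a')=c$. Then $x=a\cdot y\cdot a'$, so $X=A\cdot X\cdot A$. Only the algebraic surjectivity of $\Psi$ is used, and the open mapping theorem is not needed.

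Essential contractibility of $A$ now gives $x_0\in X$ with $\tilde D={ad}_{x_0}$. That is, $D(\Psi(a))=\Psi(a)\cdot x_0-x_0\cdot\Psi(a)$ for all $a\in A$. Since $\Psi$ is onto, every $b\in B$ has the form $\Psi(a)$, so $D(b)=b\cdot x_0-x_0\cdot b$ for all $b\in B$. Hence $D={ad}_{x_0}$ is inner and $B$ is essentially contractible. The only real point is the neo-unitality transfer, and it is immediate once $\Psi$ is surjective.
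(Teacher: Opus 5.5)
Your proposal is correct and follows the same approach as the paper: you pull $X$ back to an $A$-bimodule via $\Psi$, note that $D\circ\Psi$ is a bounded derivation and hence inner, and use surjectivity of $\Psi$ to conclude that $D$ is inner. The only difference is that you explicitly check, using surjectivity of $\Psi$ to lift the factors $b,c$, that $X$ remains neo-unital over $A$; the paper merely asserts this step.
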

\begin{proof}
	Let $X$ be a neo-unital Banach $B$-bimodule and $D:B \rightarrow X$ be a bounded derivation. The following module actions
	$$a \cdot x=\Psi(a)x,\quad x\cdot a=x\Psi(a)\qquad (a\in A, x\in X)$$
	make $X$ into a neo-unital Banach $A$-bimodule. Then the map $D\circ\Psi:A\rightarrow X$ is a bounded derivation and since $D\circ\Psi$ is inner, we conclude that $D$ must be inner.
\end{proof}
%%%%%%%%%%%%%%%%%%%%%%%%%%%%%%%%%%%%%%%%%%%%%%%%%%%%%%%%%%%%%%%%%%%%%%%%%%%%%
Utilizing Theorem \ref{3} and Proposition \ref{11}, we investigate the connection between the finiteness of the underlying group and
essentially contractibility of some associated Banach algebras.

Let $A$ be a Banach algebra. The second dual of $A$, equipped with the first Arens product $"\Box"$, can be considered as a Banach algebra
with the properties $a\Box F=a\cdot F$ and $F\Box a=F\cdot a$ for each $a\in A$ and $F\in A^{\ast\ast}$ (see \cite[Theorem 2.6.15]{dales}).
Let $Y$ be a closed $L^1(G)$-submodule of $L^\infty(G)$. If $F\cdot f \in Y$ for all $F\in L^\infty(G)^{\ast}$ and $f\in Y,$ then $Y$ is called left introverted.
So $Y^\ast$ forms a Banach algebra that its multiplication is induced by the first Arens product. A continuous bounded function $f\in C(G)$ is
called almost periodic if the set $\{\ell_sf: s\in G\}$ is relatively norm compact, where $\ell_sf:G\rightarrow\Bbb C$ denotes the left
translation of $f$ given by $\ell_sf(t)=f(st)$ for all $t\in G$. The space of all almost periodic functions on
$G$ is denoted by $ AP(G),$ which is an example of such $Y.$ We denote $ \dot{Y} $ as a left introverted subspace of $L^\infty(G)$,
where $G$ is a maximally almost periodic (MAP) locally compact group and $AP(G) \subset \dot{Y}$ (For more information on MAP groups see \cite{pal}).
For a locally compact group $G$,
two subbimodules of $L^\infty (G)$, consisting of left uniformly continuous functions and those vanishing at infinity, are denoted by $LUC(G)$ and $L_0^\infty(G),$ respectively.
\begin{Proposition}
	Let $G$ be a locally compact group. The following assertions are equivalent.
	\item[(i)] $M(G)$ is essentially contractible.
	\item[(ii)] ${LUC(G)}^\ast$ is essentially contractible.
	\item[(iii)] $ {L^1(G)}^{\ast\ast}$ is essentially contractible.
	\item[(iv)] ${L_0^\infty(G)}^\ast$ is essentially contractible.
	\item[(v)] ${\dot{Y}}^\ast$ is essentially contractible.
	\item[(vi)] $G$ is finite.
\end{Proposition}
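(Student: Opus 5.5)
Each of the algebras in (i)--(v) is unital, so by Corollary \ref{3} essential contractibility is the same as contractibility for them. The strategy is to reduce everything to the unital case and to contractibility of $M(G)$, using quotient maps onto $M(G)$ together with Proposition \ref{11}.

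\emph{(vi)$\Rightarrow$(i)--(v).} If $G$ is finite, then $L^1(G)=\ell^1(G)=M(G)$ is finite dimensional and semisimple, hence contractible. Moreover $L^\infty(G)=LUC(G)=L_0^\infty(G)=\dot Y=C(G)$ is finite dimensional. So each dual algebra in (ii)--(v) is isometrically isomorphic to $L^1(G)^{\ast\ast}=L^1(G)$ with the Arens product, which coincides with convolution on a finite-dimensional algebra. Each is therefore contractible, hence essentially contractible.

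\emph{(i)$\Rightarrow$(vi).} $M(G)$ has the unit $\delta_e$, so Corollary \ref{3} makes $M(G)$ contractible. It is then amenable, and by Dales--Ghahramani--Helemskii $G$ is discrete and amenable, so $M(G)=\ell^1(G)$. A contractible $\ell^1(G)$ forces $G$ to be finite, by Selivanov's theorem or by the remark after Corollary \ref{3}.

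\emph{(ii),(iii),(iv),(v)$\Rightarrow$(i).} We produce a continuous epimorphism onto $M(G)$ and apply Proposition \ref{11}.
\begin{itemize}
\item For $LUC(G)^\ast$, the restriction map to $C_0(G)\subseteq LUC(G)$ gives $LUC(G)^\ast\to C_0(G)^\ast=M(G)$. This is a surjective algebra homomorphism with kernel $C_0(G)^\perp$, by Lau's decomposition $LUC(G)^\ast=M(G)\oplus C_0(G)^\perp$.
\item For $L^1(G)^{\ast\ast}$, compose the restriction $L^\infty(G)^\ast\to LUC(G)^\ast$ with the map above. The restriction is a surjective homomorphism because $LUC(G)$ is introverted.
\item For $L_0^\infty(G)^\ast$, Lau--Pym give $L_0^\infty(G)^\ast\cong L^1(G)\oplus$(an ideal), with $L^1(G)$ as a quotient. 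By Proposition \ref{11}, $L^1(G)$ is essentially contractible. Since $L^1(G)$ is an ideal and an image of a unital algebra, it has an identity, namely the image of the unit, so $G$ is discrete. Corollary \ref{3} then makes $\ell^1(G)$ contractible, and $G$ is finite.
\end{itemize}

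\emph{(v)$\Rightarrow$(vi).} Because $AP(G)\subseteq\dot Y$ is introverted, restriction gives a surjective homomorphism $\dot Y^\ast\to AP(G)^\ast$. Here $AP(G)^\ast\cong C(bG)^\ast=M(bG)$, where $bG$ is the Bohr compactification. Proposition \ref{11} makes $M(bG)$ essentially contractible, so (i)$\Rightarrow$(vi) applied to $bG$ shows that $bG$ is finite. Since $G$ is MAP, $G$ embeds injectively into $bG$, so $G$ is finite.

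The main obstacle is verifying that the restriction maps are genuinely Arens-product homomorphisms onto $M(G)$ (or $M(bG)$), particularly for $L_0^\infty(G)^\ast$. For that case I would first try citing the Lau--Pym structure theorem and, failing that, fall back on an argument via the unit as sketched above.
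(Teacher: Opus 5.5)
\textbf{There is a genuine gap in (iv).} You claim Lau--Pym give $L_0^\infty(G)^\ast\cong L^1(G)\oplus J$ with $L^1(G)$ a quotient. This is false whenever $G$ is not discrete, which is exactly the case you need to exclude.

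A weak$^\ast$ cluster point of a bounded approximate identity of $L^1(G)$ is a right identity of $(L^1(G)^{\ast\ast},\Box)$. Its image under the restriction homomorphism $L^1(G)^{\ast\ast}\to L_0^\infty(G)^\ast$ is a right identity $E$ of $L_0^\infty(G)^\ast$; this is the $E$ used in the paper's proof. Any continuous epimorphism $L_0^\infty(G)^\ast\to L^1(G)$ would send $E$ to a right identity of $L^1(G)$. That forces $L^1(G)$ to be unital and $G$ to be discrete, so your step presupposes the discreteness it is meant to prove.

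Your fallback fails as well, because it needs $L_0^\infty(G)^\ast$ to be unital. For an infinite compact group, $L_0^\infty(G)=L^\infty(G)$, so $L_0^\infty(G)^\ast=L^1(G)^{\ast\ast}$, which has no identity. For the same reason your opening sentence, that all the algebras in (i)--(v) are unital, is false; take $\dot Y=L^\infty(G)$ too. That error is harmless in (ii), (iii) and (v), where you never use unitality, but (iv) rests on it.

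\textbf{How to repair (iv).} Do what you already did for (ii) and (iii): map onto $M(G)$.
\begin{itemize}
\item Since $C_0(G)\subseteq L_0^\infty(G)$, you have $L_0^\infty(G)^\perp\subseteq C_0(G)^\perp$.
\item $C_0(G)^\perp$ is the kernel of your composite epimorphism $L^1(G)^{\ast\ast}\to LUC(G)^\ast\to M(G)$.
\item $L_0^\infty(G)$ is left introverted, so $L_0^\infty(G)^\ast\cong L^1(G)^{\ast\ast}/L_0^\infty(G)^\perp$ as Banach algebras. Hence the composite factors through $L_0^\infty(G)^\ast$.
\end{itemize}
In other words, restriction $L_0^\infty(G)^\ast\to C_0(G)^\ast=M(G)$ is a continuous epimorphism. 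Proposition \ref{11} together with (i) then gives (vi). The paper instead uses Lau--Pym's identification $M(G)\cong E\Box L_0^\infty(G)^\ast$ and the epimorphism $F\mapsto E\Box F$, which is multiplicative because $F\Box E=F$.

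\textbf{The rest.} The remaining parts of your proposal are sound. In (iii) you take a different and more elementary route than the paper. The paper rules out nonzero point derivations on $L^1(G)^{\ast\ast}$ via Remark \ref{7} and then invokes Dales--Lau--Strauss. Your quotient map $L^1(G)^{\ast\ast}\to M(G)$ reduces (iii) to (i) directly, and with the fix above it handles (ii)--(iv) uniformly.
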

\begin{proof}
	If $G$ is finite then
	$M(G)=LUC(G)^*=L^1(G)^{**}=L_0^{\infty}(G)^*={\dot{Y}}^\ast=\ell^1(G)$
	are contractible and then they are essentially contractible.
	
	\noindent
	$\rm{(i)}$
	If $M(G)$ is essentially contractible, since it is unital, by Theorem \ref{3} $M(G)$ is contractible and by \cite{Spronk}, $G$ is finite.
	\\ $ \rm{(ii)}$ Since the restriction map $\Psi :{LUC(G)}^\ast \rightarrow M(G)$ is a continuous epimorphism using Proposition \ref{11}
	and part $( \rm{i})$, $G$ is finite.
	\\ $\rm{(iii)}$ By Remark \ref{7},  ${L^1(G)}^{\ast\ast}$ does not  have any nonzero continuous point derivation associated to any
	character in $ \Phi({L^1(G)}^{\ast\ast}).$ Hence $ G $ is finite by applying \cite[Theorem 11.17]{Dales.Lau.Strauss D}.
	\\$\rm{(iv)}$ By \cite[Theorem 2.11]{Lau.Pym}  $M(G)$ is isometrically isomorphic with $E\Box {L_0^\infty(G)}^\ast$ where $E$ is a
	norm one right identity of ${L_0^\infty(G)}$. Furthermore, mapping $ F\mapsto E\Box F $ is a continuous epimorphism from
	${L_0^\infty(G)}^\ast$ onto $M(G).$ Again, by applying Proposition \ref{11} and part $( \rm{i})$, $G$ is finite.
	\\$\rm{(v)}$ Since the restriction map from ${\dot{Y}}^\ast$ into $(AP(G))^\ast$ is a continuous epimorphism, $(AP(G))^\ast$ is
	essentially contractible by Proposition \ref{11}. On the other hand, we have the isomorphism $M(bG)\cong(AP(G))^\ast$, where $bG$ is the Bohr
	compactification of $G$. Hence, $bG$ must be finite, as established by part $(\rm{i})$. Since $G$ is a maximally almost periodic group,
	the canonical homomorphism from $G$ into $bG$ is injective. Therefore, $G$ must also be finite.
\end{proof}
%%%%%%%%%%%%%%%%%%%%%%%%%%%%%%
%%%%%%%%%%%%%%%%%%%%%%%%%%%%%%
%%%%%%%%%%%%%%%%%%%%%%%%%%%%%%
Ghahramani and Loy introduced the concept of essential amenability for Banach
algebras in \cite{Ghahramani.Loy} and explored various hereditary properties of essential amenable Banach algebras. Building on this work, our goal is to derive some hereditary properties specific to essential contractible Banach algebras.

Let $\{A_i\}_{i\in I}$ be a family of Banach algebras. The product
space $\underset{i\in I}{\prod} A_i$ contains all mapping $ a
\mapsto (a(i))_{i\in I} $ from $ I $ onto $ \bigcup_{i\in I}A_i. $
The $ l_p$-direct sum $(1\leq p<\infty)$  and $l_{\infty}$-direct sum of $\{A_i\}_{i\in I}$
are defined as following
$$\overset{p}{\underset{i\in I}{\bigoplus}}A_i := \Bigl\{ (a(i))_{i\in I}\in\underset{i\in I}{\prod} A_i; \|(a(i))_{i\in I}\|_{p}  = \Big(\sum_{i\in I}\|a(i)\|^p\Big)^{1/p}<\infty \Bigr\},$$
and
$$\overset{\infty}{\underset{i\in I}{\bigoplus}}A_i:=\Bigl\{ (a(i))_{i\in I}\in\underset{i\in I}{\prod}A_i;\|(a(i))_{i\in I}\|_{\infty}=\underset{i}{\max}\|a(i)\|< \infty \Bigr\}.$$
Moreover, the elements $(a(i))_{i\in I}$ in $ l_{\infty}$-direct sum of $\{A_i\}_{i\in I}$ for which $\lim_{I}a(i)=0$ is denoted by $\overset{0}{\underset{i\in I}{\bigoplus}}A_i$ and is calles the $c_0$-direct sum of $\{A_i\}_{i\in I}$.
The $c_0$-direct sum  and $l_p$-direct sum $(1\leq p\leq\infty)$  of the collections of Banach algebras by coordinatewise multiplication,
$\|\cdot\|_p$ and $\|\cdot\|_\infty$ form a Banach algebra, respectively.

Let $ A $ and $ B $ be  Banach algebras and $\theta\in \Phi(B).$
The Cartesian multiplication $ A\times B $ by the following multiplication
$$(a,b)(a^\prime,b^\prime) =(aa^\prime+\theta(b)a^\prime+\theta(b^\prime)a ,bb^\prime),$$
and the norm
$$\|(a,b)\| = \|a\|_A + \|b\|_B,$$
for all $ a,a^\prime\in A$ and $b,b^\prime\in B$, becomes a Banach algebra. Such product is called the $\theta$-Lau product of $A$ and $B$
and is denoted by $A\times_{\theta}B$. In the case of $B=\Bbb C$ and $\theta=id_{\Bbb C}$, then $A\times_{\theta}\Bbb C$ is denoted by
$A^{\sharp}$ and is called the unitization of $A$.
\begin{Proposition}\label{20} Let $\{A_i\}_{i\in I}$ be a family of Banach algebras and $A, B$ be Banach algebras.
	\begin{enumerate}
		\item[(i)] If $A$ is essentially contractible and $I$ is a closed two-sided ideal of $A,$ then $A/I$ is essentially contractible.
		\item[(ii)] Let $I$ be a closed two-ideal of $A$ with $A^n\subseteq I$ for some $n\geq2$. If $I$ is essentially contractible and $A/I$
		is contractible then $A$ is essentially contractible.
		\item[(iii)] If $\overset{p}{\underset{i\in I}{\bigoplus}}A_i$ $(1\leq p\leq\infty)$ or $\overset{0}{\underset{i\in I}{\bigoplus}}A_i$ is
		essentially contractible, then so is $A_i$ for each $i\in I.$
		\item[(iv)]If $A\times_{\theta}B$ is essentially contractible, then $A$
		and $B$ are essentially contractible. 
		\item[(v)] If $A^{\sharp}$ is essentially contractible then so is $A$.
	\end{enumerate}
\end{Proposition}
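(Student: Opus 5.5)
Parts (i), (iii), (iv) and (v) will all come from Proposition \ref{11}, and (ii) needs a direct cohomological argument.

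For (i), the quotient map $q:A\to A/I$ is a continuous epimorphism, so Proposition \ref{11} gives the claim at once. For (iii), each coordinate projection $P_j:\overset{p}{\underset{i\in I}{\bigoplus}}A_i\to A_j$ (and likewise on the $c_0$-sum) is a continuous surjective homomorphism, since multiplication is coordinatewise and $A_j$ embeds as the $j$-th coordinate. For (iv), the map $(a,b)\mapsto b$ from $A\times_\theta B$ onto $B$ is a continuous epimorphism by the form of the product, so $B$ is essentially contractible. For $A$, I would use the map $(a,b)\mapsto a+\theta(b)e$ when $A$ is unital, but this is not available in general. In the general case I would instead go back to the definition. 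Take a neo-unital $A$-bimodule $X$ and make it an $A\times_\theta B$-module via $(a,b)\cdot x=a\cdot x+\theta(b)x$ and $x\cdot(a,b)=x\cdot a+\theta(b)x$. This is a module action because $X$ is a unital $A^\sharp$-module and $(a,b)\mapsto(a,\theta(b))$ is a homomorphism into $A^\sharp$; it is neo-unital since $A\cdot X\cdot A=X$. Given a derivation $D:A\to X$, set $\tilde D(a,b)=D(a)$. Since $(a,b)(a',b')$ has first coordinate $aa'+\theta(b)a'+\theta(b')a$, the map $\tilde D$ is a derivation, hence $\tilde D=ad_x$, and restricting to $(a,0)$ gives $D=ad_x$. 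Part (v) is the special case $B=\Bbb C$, $\theta=id_{\Bbb C}$ of (iv).

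For (ii), let $X$ be neo-unital over $A$ and $D:A\to X$ a derivation. The key steps are as follows.
\begin{enumerate}
\item Since $A^n\subseteq I$, we have $X=A\cdot X\cdot A=A^n\cdot X\cdot A^n\subseteq I\cdot X\cdot I$. Hence $X$ is neo-unital as an $I$-bimodule.
\item $D|_I$ is a derivation into $X$, so $D|_I=ad_{x_0}$ for some $x_0\in X$, by essential contractibility of $I$.
\item Replace $D$ by $D'=D-ad_{x_0}$, which vanishes on $I$.
\item For $a\in A$, $i\in I$ we get $0=D'(ai)=D'(a)\cdot i$ and similarly $i\cdot D'(a)=0$. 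So $D'$ takes values in $Y=\{x\in X: x\cdot I=I\cdot x=0\}$, and $D'$ factors through $A/I$.
\end{enumerate}
The hard part is that $Y$ is killed by $I$, so its elements are annihilated by $A^n$ on both sides. Combining this with $X=I\cdot X\cdot I$ suggests $Y$ is trivial. Concretely, take $y\in Y$ and write $y=\sum i_k\cdot x_k\cdot j_k$; I would try to show $y=0$. One route is to note that $Y$ is an $A/I$-module and apply contractibility of $A/I$, via a diagonal, to produce an identity-like element $e$ acting on $Y$. Then use $y=a\cdot z\cdot b$ with $a,b\in A$ to push factors of $A$ onto $y$ until $n$ of them accumulate, giving $y\in A^n\cdot X\cdot A^n$. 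A cleaner attempt is to show directly that $D'(a)$ is annihilated by $A^{n}$ through the derivation rule, $D'(a_1\cdots a_n)=0$, and then to use the unit of $A/I$ to write $D'$ as inner on $A/I$ acting on $Y$. Contractibility of $A/I$ makes the induced derivation $A/I\to Y$ inner, say $D'=ad_{y_0}$, and then $D=ad_{x_0+y_0}$. I therefore expect step (ii), specifically verifying that $Y$ is a genuine Banach $A/I$-bimodule for which contractibility applies, to be the main point; contractibility needs no neo-unitality, so this should go through once the annihilation of $Y$ by $I$ is established.
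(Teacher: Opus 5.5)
Your proposal is correct and is essentially the paper's proof. Parts (i), (iii) and the $B$-half of (iv) go through Proposition \ref{11}, which is all the paper's ``similarly'' amounts to; the $A$-half of (iv) uses the pulled-back module and $\tilde D=D\circ p_A$; and in (ii) your final argument coincides with the paper's: restrict to $I$, subtract $ad_{x_0}$, factor $D'$ through $A/I$ into the annihilator $Y$, then use contractibility of $A/I$ to get $D=ad_{x_0+y_0}$.

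The detour suggesting $Y=\{0\}$ is unproved and unnecessary. In fact the hypotheses of (ii) force $A/I=\{0\}$: a contractible algebra is unital, while $A^n\subseteq I$ makes $A/I$ nilpotent, so its unit satisfies $e=e^n=0$ and (ii) reduces to the trivial case $I=A$.
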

\begin{proof}
	$\rm{(i)}$ It is a direct consequence of Proposition \ref{11}.

	$\rm{(ii)}$ Let $D:A\rightarrow X$ be a bounded derivation and $ X $ be a neo-unital Banach $A$-bimodule. Since $A^n\subseteq I$ so
	$$X=A\cdot X\cdot A=A\cdot (A\cdot X\cdot A)\cdot A=A^2\cdot X\cdot A^2=A^2\cdot (A\cdot X\cdot A)\cdot A^2=\cdots=A^n\cdot X\cdot A^n\subseteq I\cdot X\cdot I\subseteq X.$$ This implies that $X$ is a neo-unital Banach $I$-bimodule.
	Since $D\big|_{I}:I\rightarrow X$ is a bounded derivation thus there
	exists $ x\in X $ such that $ D\big|_{I} = {ad}_x.$ Define
	$\widetilde{D}=D-{ad}_x$. Then $\widetilde{D}:A\rightarrow X$ is a
	bounded derivations such that $\widetilde{D}|_I =0,$ and so drops
	to a map, denoted by $\overline{D}$, from $A/I$ into X. i.e.,
	$\overline{D}(a+I)=\widetilde{D}(a)$. Set
	$$Y=\{x\in X:a x = x a = 0{~\rm{for~ all}~}a\in I\}.$$
	Then $Y$ is a closed submodule of $X$. Also $Y$ is a Banach $A/I$-bimodule by the usual module actions
	$$(a+I)\cdot y=ay\quad {\rm and}\quad y\cdot (a+I)=ya,\qquad (a\in A, y\in Y).$$
	Since $\widetilde{D}|_I =0,$ we obtain
	$$a\cdot\widetilde{D}(b)=\widetilde{D}(ab)-\widetilde{D}(a)\cdot b=0,\quad\widetilde{D}(b)\cdot a=\widetilde{D}(ba)-b\cdot\widetilde{D}(a)=0,\qquad (a\in I, b\in A).$$
	This means that $\overline{D}(A/I)\subseteq Y$. Therefore, there exists $y\in Y$ such that $\overline{D}={ad}_y$ and so $D={ad}_{x+y}$

	$\rm{(iii)}$ For every $i\in I,$ $A_i$  is a homomorphic image of
	$\overset{p}{\underset{i\in I}{\bigoplus}}A_i$ or
	$\overset{0}{\underset{i\in I}{\bigoplus}}A_i$, so it is essentially
	contractible using Proposition \ref{11}.
	
	$\rm{(iv)}$ Let $X$ be a neo-unital Banach $A$-bimodule and $D:A
	\rightarrow X$ be a continuous derivation. Then $X$ is a neo-unital Banach
	($A\times_{\theta}B$)-bimodule by the following module actions
	$$(a,b) \cdot x = a \cdot x + \theta(b)x,\quad x\cdot(a,b)  =   x\cdot a + \theta(b)x,\qquad (a\in A, b\in B, x\in X).$$
	We define $\widehat{D}:A\times_{\theta}B\rightarrow X$ by
	$\widehat{D}=D\circ p_A,$ where $p_A:A\times_{\theta}B\rightarrow A$ is the projection map on the first variable.
	Continuity of $D$ and $p_A$ implies that $\widehat{D}$ is continuous. On
	the other hand, we have
	\begin{equation*}
		\begin{split}
			\widehat{D}((a,b)(a^\prime,b^\prime))&=D\circ p_A((aa^\prime+\theta(b)a^\prime+\theta(b^\prime)a,bb^\prime))
			\\&=D(aa^\prime+\theta(b)a^\prime+\theta(b^\prime)a)
			\\&=aD(a^\prime)+D(a)a^\prime+\theta(b)D(a^\prime) +\theta(b^\prime)D(a)
			\\&=(a,b)\cdot D(a^{\prime})+D(a)\cdot(a^{\prime},b^{\prime})
			\\&=(a,b)\cdot\widehat{D}((a,b))+\widehat{D}((a,b))\cdot(a^\prime,b^\prime).
		\end{split}
	\end{equation*}
So $\widehat{D}$ is a derivation and then is inner. Therefore $D$ is
	inner and we conclude that $A$ is essential contractible. Similarly,
	$B$ is essentially contractible. 
	
	$\rm{(v)}$ Note that $A\times_{\theta}\mathbb{C} = A^{\sharp}.$
	So by part $\rm{(iv)}$ the proof is complete.
\end{proof}
It is important to note that the converse of Proposition \ref{20} (iii) does not hold.
For instance, the direct sum $\overset{\infty}{\underset{n\in\Bbb N}{\bigoplus}}\Bbb C=\ell^{\infty}$ is not essentially contractible.

Recall that an element $\delta$ in Banach algebra $A$ is called a central  idempotent if  $\delta^2= \delta \cdot\delta =  \delta$
and $ \delta \cdot a = a \cdot\delta$ for all $ a\in A. $
\begin{cor}
	Let $A$ be an essentially contractible Banach algebra with a central idempotent $\delta$. Then $A$
	has a contractible closed ideal.
\end{cor}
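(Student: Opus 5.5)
The candidate ideal is $\delta A$. Since $\delta$ is a central idempotent, $\delta A=A\delta=\delta A\delta$ is a closed two-sided ideal of $A$: it is the range of the bounded projection $a\mapsto \delta a$, and it is closed because $x\in\delta A$ if and only if $x=\delta x$. It is also a unital Banach algebra with unit $\delta$. By Corollary \ref{3}, a unital essentially contractible Banach algebra is contractible. So it suffices to show that $\delta A$ is essentially contractible (if $\delta=0$ the ideal is $\{0\}$, which is trivially contractible, so assume $\delta\neq 0$).

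To do this, use Proposition \ref{11}. The map $\Psi:A\rightarrow \delta A$, $\Psi(a)=\delta a$, is linear, bounded and onto, since $\Psi(\delta a)=\delta a$. It is multiplicative because $\delta$ is central and idempotent:
$$\Psi(ab)=\delta ab=\delta\delta ab=\delta a\delta b=\Psi(a)\Psi(b).$$
Hence $\Psi$ is a continuous epimorphism from the essentially contractible algebra $A$ onto $\delta A$. By Proposition \ref{11}, $\delta A$ is essentially contractible.

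Equivalently, $\delta A\cong A/(1-\delta)A$, where $(1-\delta)A=\{a-\delta a:a\in A\}$ is a closed ideal, and one may apply Proposition \ref{20}(i) instead. Combining this with the unitality of $\delta A$ and Corollary \ref{3} shows that $\delta A$ is contractible.

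The only real points to check are that $\delta A$ is closed and that $\Psi$ is multiplicative. Both follow immediately from $\delta^2=\delta$ and the centrality of $\delta$, so no substantive obstacle is expected. The one thing worth stating explicitly is that Corollary \ref{3} is being applied to $\delta A$ with its own unit $\delta$, not to $A$.
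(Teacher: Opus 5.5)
Your proposal is correct and is essentially the paper's proof. The paper takes the closed ideal $I=\{a\in A: a\delta=0\}$, which coincides with your $(1-\delta)A$. It then uses Proposition~\ref{20}(i), which is itself just Proposition~\ref{11}, together with $A/I\cong A\delta$ to get $A\delta$ essentially contractible, and concludes with Corollary~\ref{3} using the unit $\delta$. Applying Proposition~\ref{11} directly to the epimorphism $a\mapsto\delta a$, as you do, simply skips the quotient isomorphism.
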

\begin{proof}
	Let $A$ be essentially contractible and $\delta$ be a central idempotent. Define the set
	$I=\{a\in A: a\delta=0\}$, which is a closed ideal in $A$. By applying Proposition \ref{20} $\rm{(i)}$,
	the quotient algebra $A/I$ is essentially contractible. Since $A/I\cong A\delta$, it follows that $A\delta$ is also essentially contractible. Furthermore, by Theorem \ref{3}, $A\delta$ is contractible, as it is unital with unit element $\delta$.
\end{proof}
%%%%%%%%%%%%%%%%%%%%%%%%%%%%%%%%%%%%%%%%%%%%%%%%%%%%%%%%%%%%%%%%%%%%%%%%%%%%%%%%%%%%%%%%%%%%%%%%%%%%%%%%%%%%%%%%%%%%%%%%%%%%%%%%%%%%%%%%%%%%%%%%%%%%%%%%%%
\section{Essential contractibility of Segal Banach algebras.}\label{sec4}
Segal algebras  are the certain classes of Banach algebra
associated with a locally compact group  $G$ that was introduced by
Reiter \cite{re}. For a locally compact group $G$, a Segal algebra
$S^1(G)$ is a left ideal of $L^1(G)$ that satisfies the following
conditions:
\\
$\rm{(i)}$ $\overline{S^1(G)}^{\|\cdot\|_1} = L^1(G);$\\
$\rm{(ii)}$ $(S^1(G), \|\cdot\|_{S^1})$ is a Banach space and $\|\cdot\|_{1}\leq\|\cdot\|_{S^1}$;\\
$\rm{(iii)}$ $S^1(G)$ is left translation invariant, and $\|L_xf\|_{S^1}=\|f\|_{S^1}$ for all $x\in G$ and every $f\in S^1(G)$,
where $L_xf$ is defined by $L_xf(y)=f(x^{-1}y)$ for all $y\in G$;\\
$\rm{(iv)}$ For every $ x\in G$ and  $f\in S^1(G),$ the map $ x \mapsto L_x f $ is continuous in $ \|\cdot\|_{S^1}$-norm.\\
A Segal algebra $S^1(G)$ is called symmetric if it also has the following conditions:
\\
$\rm{(v)}$ $S^1(G)$ is right translation invariant, and $\|R_xf\|_{S^1}=\|f\|_{S^1}$ for all $x\in G$ and every $f\in S^1(G)$,
where $R_xf$ is defined by $R_xf(y)=f(xy)$ for all $y\in G$;\\
$\rm{(vi)}$ For every $x\in G$ and  $f\in S^1(G),$ the map  $x \mapsto R_x f$ is continuous in $\|\cdot\|_{S^1}$-norm.

It is well-known that every Segal algebra has a left
approximate identity that is bounded in $\|\cdot\|_1$ see \cite{re}.

An abstract Segal algebra is a general  concept of Segal algebra. A dense left ideal  $ B $ of a Banach algebra $A$ is called
an abstract Segal algebra when it admits a norm $ \|\cdot \|_B $ for which $(B,\|\cdot\|_B)$ is a Banach algebra and \\
$\rm{(i)}$ there exists $M>0$ such that $\|b\|_{A}\leq M\|b\|_{B}$ for each $b\in B$.\\
$\rm{(ii)}$ there exists $C>0$ such that $\|ab\|_{B}\leq C \|a\|_{A}\|b\|_{B}$ for each $a\in A, b\in B$.

%Every Segal algebra is an abstract Segal algebra, while the converse is not valid necessarily.
\begin{Definition}
	Let $I$ be an ideal of  a Banach algebra $A$. We say that  $I$ has
	property $(\mathcal{P})$ if $I$ admits a norm $\|\cdot\|_I$ which $(I,
	\|\cdot\|_I)$ is a Banach algebra and
	$$\|ab\|_I, \|ba\|_I\leq \|a\|_A \|b\|_I,\qquad (a\in A, b\in I).$$
\end{Definition}
An abstract Segal algebra $B$ is called symmetric if it also has property $(\mathcal{P}).$
%%%%%%%%%%%%%%%%%%%%%%%%%%%%%%%%%%%%%%%%%%%%%%%%%%%%%%%%%%%%%%%%%%%%%%%%%%%%%

\noindent The famous theorem of the real analysis, the continuous extension theorem, can also be generalized to bounded derivations in the case where the range is a Banach space. In fact, the same technique works here. 
\begin{Proposition}\label{Prop}
	Let $A$ be a Banach algebra, $X$ be a Banach
	$A$-bimodule, $\mathcal{D}$ be a dense subalgebra of $A$ and $T:
	\mathcal{D}\rightarrow X$ be a bounded derivation. Then
	there exists a bounded operator $\tilde{T}:A \rightarrow X$ which
	is a derivation and $\tilde{T}\Big|_{\mathcal{D}} = T.$ 
	This extension is unique. Furthermore,  if $T$ is inner, then
	$\tilde{T}$ is inner.
\end{Proposition}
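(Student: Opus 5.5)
The plan is the standard density argument. Here $\mathcal{D}$ carries the norm of $A$, so boundedness of $T$ means $\|T(d)\|\le K\|d\|_A$ for all $d\in\mathcal{D}$ and some $K\ge0$.

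\emph{Existence.} Given $a\in A$, choose a sequence $(d_n)\subseteq\mathcal{D}$ with $d_n\to a$ in $A$. Since $\|T(d_n)-T(d_m)\|\le K\|d_n-d_m\|$, the sequence $(T(d_n))$ is Cauchy in $X$. Because $X$ is complete, it converges, and we define $\tilde T(a)=\lim_n T(d_n)$.

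\emph{Well-definedness and boundedness.} If $(d_n')$ is another sequence in $\mathcal{D}$ converging to $a$, then $\|T(d_n)-T(d_n')\|\le K\|d_n-d_n'\|\to 0$, so the limit does not depend on the sequence. Taking constant sequences shows $\tilde T|_{\mathcal{D}}=T$. Linearity of $\tilde T$ follows from linearity of $T$ and of limits, and passing to the limit in $\|T(d_n)\|\le K\|d_n\|$ gives $\|\tilde T\|\le K$.

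\emph{Derivation identity.} Take $a,b\in A$ with approximating sequences $d_n\to a$ and $e_n\to b$ in $\mathcal{D}$. Since $\mathcal{D}$ is a subalgebra, $d_ne_n\in\mathcal{D}$, and by continuity of multiplication $d_ne_n\to ab$. Hence
\[
\tilde T(ab)=\lim_n T(d_ne_n)=\lim_n\bigl(d_n\cdot T(e_n)+T(d_n)\cdot e_n\bigr)=a\cdot\tilde T(b)+\tilde T(a)\cdot b .
\]
This step is the main point requiring care: the limit passage needs the joint continuity estimate $\|d_n\cdot T(e_n)-a\cdot\tilde T(b)\|\le\|d_n-a\|\,\|T(e_n)\|+\|a\|\,\|T(e_n)-\tilde T(b)\|$, which uses the boundedness of the module actions on the Banach bimodule $X$ and the boundedness of $(T(e_n))$. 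The term $T(d_n)\cdot e_n$ is handled the same way.

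\emph{Uniqueness.} Two bounded linear maps that agree on the dense set $\mathcal{D}$ agree on $A$ by continuity.

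\emph{Inner case.} Suppose $T=\mathrm{ad}_x|_{\mathcal{D}}$ for some $x\in X$. Then $\mathrm{ad}_x:A\to X$ is a bounded derivation, since $\|\mathrm{ad}_x(a)\|\le 2\|a\|\,\|x\|$ up to the module constants, and it extends $T$. By uniqueness, $\tilde T=\mathrm{ad}_x$, so $\tilde T$ is inner.
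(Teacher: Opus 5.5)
Your proof is correct and follows essentially the same density argument as the paper: define $\tilde T(a)=\lim_n T(d_n)$, check that it is well defined and bounded, get uniqueness from density, and deduce innerness by applying uniqueness to $\mathrm{ad}_x$ on all of $A$. The one addition is that you carry out the limit passage in the derivation identity, using the boundedness of the module actions, which the paper simply calls easy.
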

\begin{proof}
	Take $a\in A$ and choose $\{a_n\} \subset \mathcal{D}$ such that
	$a_n \rightarrow a$. Then $\{a_n\}$ is a Cauchy sequence in $A$ and so 
	$\{T(a_n)\}$ is a Cauchy sequence in $B$.
	Since $B$ is complete so $\{T(a_n)\}$ is convergent.
	We set $\tilde{T}(a):=\underset{n\rightarrow\infty}{\lim} T(a_n)$. If
	$a_n\rightarrow a$ and $b_n\rightarrow a$, using continuity of $T,$ we conclude that $\underset{n\rightarrow\infty}{\lim} T(a_n-b_n) =0.$
	It means that $\tilde{T}$ is well-defined.
	It is easy to see that $\tilde{T}$ is a linear derivation. Also, we have
	$$\|\tilde{T}(a)\| = \|\underset{n\rightarrow\infty}{\lim} T(a_n)\| = \underset{n\rightarrow\infty}{\lim} \|T(a_n)\| \leq \underset{n\rightarrow\infty}{\lim} \|T\|\|a_n\| = \|T\|\|a\|.$$
	This means that $\tilde{T}$ is bounded.
	Finally, if $\tilde{T_1}$ and $\tilde{T_1}$
	are two continuous extensions of $T,$ then for $a\in A$ we have
	$$\tilde{T_1}(a)=\underset{n\rightarrow\infty}{\lim}\tilde{T_1}(a_n)=\underset{n\rightarrow\infty}{\lim}T(a_n)=\underset{n\rightarrow\infty}{\lim}\tilde{T_2}(a_n)=\tilde{T_2}(a)$$
	Furthermore, if $T$ is inner, the uniqueness of extension and continuity of action imply that $\tilde{T}$ is inner.
	
\end{proof}
The extension obtained from Proposition \ref{Prop} gives us criterions for essential contractibility of Segal algebras and dense substructures.
\begin{Proposition}\label{4}
	Let $A$ be a Banach algebra, and $I$ be a dense subalgebra of $A$. If there exists an integer $n \geq 2$ such that $A^n  \subseteq I$ and  $I$ is
	essentially contractible, then $A$ is essentially contractible.
\end{Proposition}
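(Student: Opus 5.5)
The approach combines the neo-unital argument from Proposition~\ref{20}~(ii) with the extension result Proposition~\ref{Prop}. Let $X$ be a neo-unital Banach $A$-bimodule and $D:A\rightarrow X$ a bounded derivation. The aim is to show that $D$ is inner. We regard $I$ with the norm of $A$, so $D|_I$ is a bounded derivation. Strictly, essential contractibility of $I$ is used for $I$ as a normed algebra on which derivations into Banach modules are considered; if $I$ carries its own complete norm, $D|_I$ is still bounded provided that norm dominates $\|\cdot\|_A$, as for Segal-type subalgebras.

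\textbf{Step 1: $X$ is neo-unital as an $I$-bimodule.} Iterate the factorization exactly as in Proposition~\ref{20}~(ii):
$$X=A\cdot X\cdot A=A^2\cdot X\cdot A^2=\cdots=A^n\cdot X\cdot A^n\subseteq I\cdot X\cdot I\subseteq X.$$
Here each element $a_1\cdots a_n$ lies in $I$ by hypothesis, so $X=I\cdot X\cdot I$. This step needs care. Writing $x=a\cdot y\cdot b$ and then $y=a'\cdot z\cdot b'$ gives $x=(aa')\cdot z\cdot(b'b)$, so every $x$ is a single product $c\cdot z\cdot d$ with $c,d\in A^n\subseteq I$. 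Hence the sets, not merely their spans, coincide, and the neo-unital condition (which involves no closure) holds for $I$.

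\textbf{Step 2: $D$ is inner on $I$.} By essential contractibility of $I$, the restriction $D|_I:I\rightarrow X$ is inner, say $D|_I={ad}_x$ for some $x\in X$.

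\textbf{Step 3: extend from $I$ to $A$.} Set $\widetilde{D}=D-{ad}_x$. This is a bounded derivation on $A$ that vanishes on the dense subalgebra $I$. By continuity, or equivalently by the uniqueness part of Proposition~\ref{Prop}, since both $D$ and ${ad}_x$ are continuous extensions of $D|_I={ad}_x|_I$, we get $\widetilde{D}=0$. Thus $D={ad}_x$ and $A$ is essentially contractible.

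The main obstacle is Step 1: we must make sure the neo-unital property transfers without any closure. The factorization $x=a\cdot(a'\cdot z\cdot b')\cdot b$ handles this, because it places each element of $X$ in the set $I\cdot X\cdot I$ itself. A secondary point is the topology on $I$ needed for $D|_I$ to be bounded, which is settled by the norm-domination remark above.
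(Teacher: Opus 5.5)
Your proposal is correct and follows essentially the same route as the paper: you iterate to get $X=A^n\cdot X\cdot A^n\subseteq I\cdot X\cdot I$, so $X$ is a neo-unital $I$-bimodule; you use essential contractibility of $I$ to get $D|_I={ad}_x$; and you conclude $D={ad}_x$ by uniqueness of continuous extensions from the dense subalgebra, as in Proposition~\ref{Prop}. Your explicit set-level factorization $x=(aa')\cdot z\cdot(b'b)$, and your remark that the norm on $I$ must dominate $\|\cdot\|_A$ for $D|_I$ and the module actions to be bounded, make precise two points the paper leaves implicit.
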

\begin{proof}
	Let $X$ be a neo-unital Banach $A$-bimodule, and let $D:A\rightarrow X$ be
	a continuous derivation. It follows that $X=A\cdot (A\cdot X\cdot
	A)\cdot A$, and by induction, we obtain the inclusion $X = A^n\cdot X\cdot  A^n \subseteq
	I.X.I \subseteq X.$ This means that $X$ is a neo-unital Banach $I$-bimodule. Consequently, 
	there exists an element $x\in X$ such that $D|_{I} = {ad}_{x}.$ Since
	$D$ is the unique extension of $D|_{I}$ we conclude that $D$ is inner by
	Proposition \ref{Prop}.
\end{proof}
\begin{cor}
	Let $B$ be an essentially contractible abstract Segal algebra with respect to $A$. If $A^n \subseteq
	B$ for some integer $n\geq 2$ then $A$ is essentially contractible.
\end{cor}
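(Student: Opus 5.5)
The corollary should follow directly from Proposition \ref{4}, with $I=B$. So the plan is to check that an abstract Segal algebra $B$ with respect to $A$ satisfies the hypotheses of that proposition: $B$ is a dense subalgebra of $A$, $A^n\subseteq B$ for some $n\ge 2$, and $B$ is essentially contractible. Density and the subalgebra property come from the definition, since $B$ is a dense left ideal of $A$ and hence closed under multiplication. The inclusion $A^n\subseteq B$ and essential contractibility of $B$ are assumed.

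One point needs care. In Proposition \ref{4}, essential contractibility of $I$ refers to $I$ as a Banach algebra, and here that is $(B,\|\cdot\|_B)$ rather than $B$ with the norm of $A$. So I will rerun the argument of Proposition \ref{4} in this setting rather than quoting it blindly.

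Let $X$ be a neo-unital Banach $A$-bimodule and $D:A\to X$ a continuous derivation. Iterating $X=A\cdot X\cdot A$ gives $X=A^n\cdot X\cdot A^n\subseteq B\cdot X\cdot B$. Restricting the module actions, $X$ becomes a Banach $B$-bimodule for $\|\cdot\|_B$, because
$$\|b\cdot x\|\le \|b\|_A\|x\|\le M\|b\|_B\|x\|$$
and similarly on the right. Hence $X$ is a neo-unital Banach $(B,\|\cdot\|_B)$-bimodule. The same inequality shows that $D|_B:(B,\|\cdot\|_B)\to X$ is a bounded derivation. By essential contractibility of $B$, there is $x\in X$ with $D|_B={ad}_x$ on $B$.

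The main step is the extension. The derivations $D$ and ${ad}_x$ are both continuous on $(A,\|\cdot\|_A)$ and agree on $B$, which is $\|\cdot\|_A$-dense in $A$. So $D={ad}_x$ on all of $A$, which is exactly the uniqueness part of Proposition \ref{Prop}.

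The only real obstacle is this mismatch of norms: boundedness of $D|_B$ is needed for $\|\cdot\|_B$, while density is needed in $\|\cdot\|_A$. Condition (i) of the definition of an abstract Segal algebra resolves the first, and the uniqueness argument only uses $\|\cdot\|_A$-continuity of $D$ and ${ad}_x$, so no further input is needed.
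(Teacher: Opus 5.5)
Your proposal is correct and follows the paper's route. The paper obtains the corollary directly from Proposition~\ref{4} with $I=B$, and your argument is that proposition's proof: neo-unitality over $B$ from $X=A^n\cdot X\cdot A^n$, innerness of $D|_B$, and extension by uniqueness via $\|\cdot\|_A$-density. You also check explicitly, using $\|b\|_A\le M\|b\|_B$, that $X$ is a Banach $(B,\|\cdot\|_B)$-bimodule and that $D|_B$ is $\|\cdot\|_B$-bounded; the paper leaves this implicit, and you handle it correctly.
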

Now, we are ready to present the main result that was proved amenability version of that in \cite{Ghahramani.Loy}.
\begin{Theorem}\label{1}
	Suppose that $A$ is  a contractible Banach algebra and $I$ is a
	non-closed ideal in $A$ with the property $(\mathcal{P})$. If $I$ has an approximate identity, which  is an approximate
	identity for $A$, then $I$ is essentially contractible.
\end{Theorem}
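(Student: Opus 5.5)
The plan is to imitate the proof that a contractible algebra has only inner derivations, the one that builds the implementing element from a diagonal. The obstruction is that the diagonal of $A$ lives in $A\widehat{\otimes}A$, while $D$ is only defined, and only continuous, on $(I,\|\cdot\|_I)$. We remedy this in two ways: we upgrade the $I$-bimodule $X$ to an $A$-bimodule, and we always multiply diagonal legs by elements of $I$ so that $D$ can be applied. Property $(\mathcal{P})$ is what makes both steps bounded.

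Fix a neo-unital Banach $I$-bimodule $X$ and a bounded derivation $D:I\rightarrow X$. Since $A$ is contractible it is unital and has a diagonal $u=\sum_k a_k\otimes b_k\in A\widehat{\otimes}A$ with $au=ua$ and $\pi_A(u)=1$. Let $(e_\alpha)\subseteq I$ be the given approximate identity; I read it as an approximate identity of $(I,\|\cdot\|_I)$ that is also one for $A$.

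\emph{Step 1: the $A$-bimodule structure on $X$.} For $x=i\cdot y\cdot j$ with $i,j\in I$ and $y\in X$, set $a\cdot x=(ai)\cdot y\cdot j$, and symmetrically on the right. To see this is well defined, note that by $(\mathcal{P})$
$$\|ae_\alpha i-ai\|_I\leq\|a\|_A\|e_\alpha i-i\|_I\rightarrow0,$$
so $(ai)\cdot y\cdot j=\lim_\alpha (ae_\alpha)\cdot x$. The right-hand side depends only on $x$. The module axioms then follow from those of $X$ as an $I$-bimodule. The same description shows that $1\cdot x=x\cdot 1=x$, and that $a\cdot x=ax$ when $a\in I$.

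\emph{Step 2: using the diagonal.} For $i\in I$ define
$$\psi(i)=\sum_k a_k\cdot D(b_ki),\qquad \chi(i)=\sum_k D(ia_k)\cdot b_k .$$
Both sums converge, because by $(\mathcal{P})$
$$\|a\cdot D(bi)\|\leq\|D\|\,\|a\|_A\|b\|_A\|i\|_I,$$
so the map $a\otimes b\mapsto a\cdot D(bi)$ extends boundedly to $A\widehat{\otimes}A$, and similarly for $\chi$. Using $cu=uc$, the Leibniz rule and $\pi_A(u)=1$, a routine computation gives, for $c,i\in I$,
$$\psi(ci)=c\cdot\psi(i),\qquad \psi(ci)=c\cdot D(i)+\psi(c)\cdot i,$$
$$\psi(ic)=i\cdot D(c)+\psi(i)\cdot c,$$
and the mirror identities for $\chi$. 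The first two give $c\cdot(\psi(i)-D(i))=\psi(c)\cdot i$.

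\emph{Step 3: producing the implementing element.} Take $c=e_\alpha$ in the last identity. Since $X$ is neo-unital and $(e_\alpha)$ acts as an approximate identity on $X=I\cdot X\cdot I$, we get
$$D(i)-\psi(i)=-\lim_\alpha\psi(e_\alpha)\cdot i .$$
The aim is to obtain an element $x_0\in X$, for instance as a limit of $\psi(e_\alpha)$, or more directly as $\sum_k a_k\cdot D(b_k e)\cdot{}$ for a suitable factor $e$ furnished by the neo-unital factorisation. Combining the $\psi$ and $\chi$ identities, this $x_0$ should satisfy $D(i)=i\cdot x_0-x_0\cdot i$ for all $i\in I$.

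\emph{Expected obstacles.} The main obstacle is Step 3: showing that the relevant net actually converges in $X$, rather than only after multiplying by elements of $I$. My first attempt is to write $x=i\cdot y\cdot j$, use $(\mathcal{P})$ to move $e_\alpha$ through the $I$-norm, and exploit $\pi_A(u)=1$ so that no bounded approximate identity of $I$ is needed. A secondary technical point is continuity of $x\mapsto a\cdot x$ in Step 1. For this I would appeal to the closed graph theorem, using the formula $a\cdot x=\lim_\alpha(ae_\alpha)\cdot x$. Finally, the non-closedness of $I$ enters only through the density of $I$ in $A$, which is what gives the $A$-bimodule structure its meaning.
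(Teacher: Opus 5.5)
Your Steps 1 and 2 are fine. Step 1 is the Ghahramani--Loy module extension, and your identities for $\psi$ check out. The gap is Step 3, which is the only step that would produce an implementing element, and it is not carried out. Your identities give only $D(i)-\psi(i)=-\lim_\alpha\psi(e_\alpha)\cdot i$, which is an ``approximately inner'' statement. Nothing in your argument makes $\psi(e_\alpha)$ converge: $(e_\alpha)$ need not be $\|\cdot\|_I$-bounded, and $X$ is not assumed to be a dual module, so there is no compactness from which to extract a cluster point. As written, the proposal is not a proof.

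The missing idea is elementary and removes the obstacle entirely. You note that $A$ is unital. Since $(e_\alpha)$ is an approximate identity for $A$, you get $e_\alpha=e_\alpha 1\rightarrow 1$ in $\|\cdot\|_A$. The invertible elements of $A$ form an open set, so some $e_{\alpha_0}\in I$ is invertible in $A$. As $I$ is an ideal, $1=e_{\alpha_0}^{-1}e_{\alpha_0}\in I$, so $I=A$.

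This contradicts the assumption that $I$ is non-closed. So the hypotheses of the statement are inconsistent and it holds vacuously. In particular, your closing remark about non-closedness is backwards: a dense one-sided ideal of a unital Banach algebra is never proper.

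Even if you disregard non-closedness, once $1\in I$ your own identities finish the job with $c=1$. They give $\psi(i)=\psi(i\cdot 1)=i\cdot\psi(1)$ and $\psi(i)-D(i)=\psi(1)\cdot i$, hence $D=\mathrm{ad}_{\psi(1)}$.

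For comparison, the paper's proof also makes $X$ an $A$-bimodule. It then extends $D$ to $A$ by Proposition \ref{Prop} and invokes contractibility of $A$. As you rightly sensed, that extension needs $D$ to be $\|\cdot\|_A$-bounded on $I$. This is justified only after observing $1\in I$: then $(\mathcal{P})$ gives $\|a\|_I\le\|1\|_I\|a\|_A$, and the open mapping theorem makes the two norms equivalent.
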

\begin{proof}
	Let $X$ be a neo-unital Banach $I$-bimodule and let $D:I\rightarrow X$ be
	a continuous derivation. Since $I$ has property $(\mathcal P)$, $X$ can be regarded as a Banach
	$A$-bimodule, as demonstrated in the proof of \cite[Theorem 7.1]{Ghahramani.Loy}. Hence, using Proposition \ref{Prop}, there exists a unique extension $\widetilde{D}:A
	\rightarrow X$. Since
	$A$ is contractible, $\widetilde{D}$ is an inner derivation and consequently $D$ is also
	inner.
\end{proof}
Applying Proposition \ref{4} and Theorem \ref{1} we obtain the following result.
\begin{cor}
	Suppose that $A$ is a Banach algebra. Let  $I$ be a non-closed ideal of $A$
	with property $(\mathcal{P})$ and it has an approximate identity which is an approximate identity for $A$.
	Then $A$ is essentially contractible if and only if $I$ is essentially contractible
\end{cor}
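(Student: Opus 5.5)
The corollary combines Proposition \ref{4} and Theorem \ref{1}, so I would prove the two implications separately.

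\emph{If $I$ is essentially contractible, then $A$ is.} I would appeal to Proposition \ref{4}, for which I need $I$ dense in $A$ and $A^n\subseteq I$ for some $n\ge 2$. Since $I$ is an ideal, $A^2\supseteq AI\cup IA$, but in general only $IA\subseteq I$ is automatic, not $A^2\subseteq I$. So I would try to remove this hypothesis by running the argument of Proposition \ref{4} directly.

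Let $X$ be a neo-unital Banach $A$-bimodule and $(e_\alpha)\subseteq I$ the common approximate identity. The key point is that $X$ is neo-unital as an $I$-bimodule. For $x=a\cdot y\cdot b$ we have $e_\alpha a\to a$ and $b e_\alpha\to b$. If $(e_\alpha)$ is bounded in $A$, Cohen factorization applied to $A\cdot X$ viewed as an $I$-module gives $X=I\cdot X\cdot I$. Alternatively, $a=\lim e_\alpha a$ with $e_\alpha a\in I$ gives $A\cdot X\cdot A\subseteq\overline{I\cdot X\cdot I}$, and Cohen factorization then closes the gap. This is the main obstacle: the literal statement gives only an approximate identity, not a bounded one, so I expect to need the boundedness implicit in the paper's setting, or else to reduce via Proposition \ref{4} whenever $A^2\subseteq I$.

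Once $X$ is neo-unital over $I$, the restriction $D|_I$ is a bounded derivation, since $\|\cdot\|_A\le\|\cdot\|_I$ up to a constant by property $(\mathcal P)$. Essential contractibility of $I$ gives $D|_I=\mathrm{ad}_x$. Density of $I$ in $A$ follows because $a=\lim e_\alpha a$ with $e_\alpha a\in I$. The uniqueness part of Proposition \ref{Prop} then gives $D=\mathrm{ad}_x$.

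\emph{If $A$ is essentially contractible, then $I$ is.} I would follow the proof of Theorem \ref{1}. A neo-unital Banach $I$-bimodule $X$ becomes an $A$-bimodule via
\[
a\cdot(i\cdot x\cdot j)=(ai)\cdot x\cdot j ,
\]
which is well defined and bounded by property $(\mathcal P)$, exactly as in \cite[Theorem 7.1]{Ghahramani.Loy}. It is neo-unital over $A$ because $X=I\cdot X\cdot I\subseteq A\cdot X\cdot A$. A derivation $D:I\to X$ extends to $\widetilde D:A\to X$ by Proposition \ref{Prop}, using density of $I$ and the fact that $D$ is bounded for the $A$-norm. Justifying that last continuity is the second delicate point; I would obtain it from the extended module structure by writing $D(i)=\lim D(e_\alpha i)$. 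Essential contractibility of $A$, rather than contractibility as in Theorem \ref{1}, then makes $\widetilde D$ inner, and hence $D$ is inner.
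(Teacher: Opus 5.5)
\textbf{Direction $I\Rightarrow A$.} There is a genuine gap here. The key claim, that every neo-unital Banach $A$-bimodule $X$ is neo-unital over $I$, is false even when $(e_\alpha)$ is bounded in $\|\cdot\|_A$. Take $X=A$. It is neo-unital over $A$ by Cohen's theorem, yet $I\cdot A\cdot I\subseteq I\neq A$, because $I$ is not closed.

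Cohen factorization over $I$ would need an approximate identity bounded in $\|\cdot\|_I$, and under these hypotheses $I$ has none. If it had one, Cohen applied to the left Banach $I$-module $A$ would give $A=I\cdot A\subseteq I$. So $A\cdot X\cdot A\subseteq\overline{I\cdot X\cdot I}$ is true, but it cannot be upgraded to $X=I\cdot X\cdot I$.

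Your fallback does not rescue this direction. The inclusion $A^n\subseteq I$ is not a hypothesis. Moreover, when $A$ has a bounded approximate identity it is impossible, since then $A^n=A$ by Cohen. The paper's one-line proof cites Proposition \ref{4}, so it tacitly assumes $A^n\subseteq I$. You were right to notice that this hypothesis is missing. The honest repair is to add it, after which Proposition \ref{4} applies directly (density of $I$ comes from the approximate identity). Trying to derive neo-unitality over $I$ cannot work.

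\textbf{Direction $A\Rightarrow I$.} Your observation that the induced $A$-bimodule satisfies $X=I\cdot X\cdot I\subseteq A\cdot X\cdot A$ is correct. It lets essential contractibility of $A$ replace the contractibility assumed in Theorem \ref{1}, a mismatch the paper's citation of Theorem \ref{1} ignores.

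The extension step, however, is unjustified. Proposition \ref{Prop} needs $\|D(i)\|\le C\|i\|_A$ on $I$, whereas $D$ is only $\|\cdot\|_I$-bounded. Your suggestion $D(i)=\lim_\alpha D(e_\alpha i)=\lim_\alpha\bigl(e_\alpha\cdot D(i)+D(e_\alpha)\cdot i\bigr)$ yields no such estimate. The first term contains $D(i)$ again. The second is controlled only by $\|D\|\,\|e_\alpha\|_I\,\|i\|_A$, and $\|e_\alpha\|_I$ is necessarily unbounded by the remark above.

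Nor is this a technicality. Once $X$ is a Banach $A$-bimodule and $A$ is essentially contractible, $\|\cdot\|_A$-continuity of $D$ is equivalent to $D$ being inner, since every $\mathrm{ad}_x$ is $\|\cdot\|_A$-bounded. So this estimate is the whole content of the direction.

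What is missing is an actual construction of $\widetilde D(a)$ for $a\in A\setminus I$, that is, an element $z\in X$ with $z\cdot c=D(ac)-a\cdot D(c)$ for all $c\in I$. The natural candidates $D(ae_\alpha)-a\cdot D(e_\alpha)$ do satisfy $\bigl(D(ae_\alpha)-a\cdot D(e_\alpha)\bigr)\cdot c\to D(ac)-a\cdot D(c)$. But they are not evidently bounded, and $X$ need not be a dual module, so no weak$^*$-compactness argument is available to extract a limit. The paper's proof of Theorem \ref{1} passes over the same point by simply invoking Proposition \ref{Prop}.
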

In the following, we discuss the essential contractibility of Segal algebras associated with groups and hypergroups. For further details on hypergroups, we refer to the work of Jewett \cite{Jewett}.
\begin{cor}\label{22} The following Banach algebras are essentially contractible:
	\item[(i)] A symmetric Segal algebra $S^1(G),$ for a finite group $G$;
	\item[(ii)] A Segal algebra $S^1(H),$ for a commutative finite hypergroup $H$;
	\item[(iii)] $\overline{[B^2]}^{\|\cdot\|_{ B}},$ for a symmetric abstract Segal algebra $B$ with respect to a contractible Banach algebra.
\end{cor}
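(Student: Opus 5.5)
For (i) and (ii) the idea is that finiteness collapses the Segal algebra onto the full group (hypergroup) algebra, which is contractible; for (iii) we apply Theorem \ref{1} directly.

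For (i), let $G$ be finite. Then $L^1(G)=\ell^1(G)$ is finite dimensional, and $S^1(G)$ is dense in it by condition (i) of the definition. A dense subspace of a finite-dimensional space is the whole space, so $S^1(G)=L^1(G)$ as sets. By condition (ii) of the definition the identity map $(S^1(G),\|\cdot\|_{S^1})\to (L^1(G),\|\cdot\|_1)$ is a continuous bijection of Banach spaces, hence an isomorphism (open mapping theorem, or simply because all norms on a finite-dimensional space are equivalent). Thus $S^1(G)$ is topologically isomorphic to $\ell^1(G)$. For finite $G$, $\ell^1(G)$ is contractible, with diagonal $u=\frac{1}{|G|}\sum_{g\in G}\delta_g\otimes\delta_{g^{-1}}$. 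Contractibility passes to isomorphic algebras, and contractible algebras are essentially contractible, which gives (i). Symmetry plays no real role here.

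For (ii), we run the same argument for a finite commutative hypergroup $H$. The Segal algebra $S^1(H)$ coincides with $L^1(H)$, which is finite dimensional, commutative and semisimple (via the Fourier/Plancherel theory for finite commutative hypergroups, Jewett \cite{Jewett}). By Wedderburn it is therefore isomorphic to $\mathbb{C}^n$. The algebra $\mathbb{C}^n$ has the diagonal $\sum_i e_i\otimes e_i$, so it is contractible and hence essentially contractible. The only nontrivial input in this part is the semisimplicity of $L^1(H)$. I expect this to be the main point needing a citation: one has to check that the characters separate points, which follows from the finite Plancherel theorem for commutative hypergroups.

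For (iii), let $B$ be a symmetric abstract Segal algebra in a contractible algebra $A$, and set $I=\overline{[B^2]}^{\|\cdot\|_B}$. We verify the hypotheses of Theorem \ref{1} in four steps.
\begin{enumerate}
\item $I$ is an ideal of $A$ with property $(\mathcal P)$. This follows from the property $(\mathcal P)$ estimates for $B$ together with continuity of the products.
\item $I$ is dense in $A$. Since $A$ has an identity and $B$ is dense, $B^2\supseteq B\cdot B$ is dense in $A$.
\item $I$ has an approximate identity that is also one for $A$. Here the unit $1$ of $A$ acts as an identity on $I$, but we need an approximate identity lying in $I$. Take $b_n\in B$ with $b_n\to 1$ in $\|\cdot\|_A$, and put $e_n=b_n^2\in I$. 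Property $(\mathcal P)$ gives $\|e_nx-x\|_I\le \|b_n^2-1\|_A\|x\|_I\to0$ for $x\in I$, and similarly on the right and in $A$.
\item Non-closedness. If $I$ were closed in $A$, then $I=A$, and $A$ would already be contractible. In that case the conclusion holds directly, since contractibility implies essential contractibility.
\end{enumerate}
With these hypotheses in place, Theorem \ref{1} yields that $I$ is essentially contractible.
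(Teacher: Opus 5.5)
Your proof is correct. For (i) and (ii) it takes a different route from the paper; for (iii) it follows the paper's route.

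\textbf{Parts (i) and (ii).} The paper obtains all three parts from Theorem \ref{1}. It feeds in the contractibility of $L^1(G)$ for finite $G$ (Runde), a result from the hypergroup literature for finite commutative $H$, and Samea's Theorem 3.5 for (iii). You argue directly instead. For finite $G$ or $H$, density forces $S^1=L^1$ as sets, with equivalent norms. So the Segal algebra is itself contractible: $\ell^1(G)$ has your explicit diagonal, and $L^1(H)\cong\mathbb{C}^n$ by semisimplicity. Essential contractibility then follows immediately.

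This is more elementary and also more robust. Since $S^1(G)=L^1(G)$ is closed in $L^1(G)$, the ``non-closed'' hypothesis of Theorem \ref{1} actually fails in (i) and (ii), and your argument avoids that awkward application. The cost is that in (ii) you must supply semisimplicity of $L^1(H)$ yourself. Either injectivity of the Fourier transform for commutative hypergroups or the faithful regular $*$-representation on $L^2(H)$ does this; the latter makes $L^1(H)$ a finite-dimensional commutative $C^*$-algebra.

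\textbf{Part (iii).} You apply Theorem \ref{1} as the paper does, but verify its hypotheses by hand rather than citing Samea. Taking $e_n=b_n^2$ with $b_n\to 1$ is a clean, self-contained substitute for the approximate identity imported from Samea's paper. Your case split also handles the non-closedness hypothesis, which the paper passes over.

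One small point remains in that case split. When $I$ is closed you get $I=A$ as sets, but to conclude that $(I,\|\cdot\|_B)$ is contractible you need the norms to be equivalent. Say explicitly that $\|\cdot\|_B$ and $\|\cdot\|_A$ are equivalent on $A$ by the open mapping theorem, since $\|\cdot\|_A\le M\|\cdot\|_B$, exactly as you did in (i).
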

\begin{proof}
	$\rm{(i)}$ By Theorem \ref{1} and \cite[Corollary 4.1.3]{Runde}.\\
	$\rm{(ii)}$ By Theorem \ref{1} and \cite[Proposition 3.2]{BS}.\\
	$\rm{(iii)}$ By Theorem \ref{1} and \cite[Theorem 3.5]{Samea}.
\end{proof}
%%%%%%%%%%%%%%%%%%%%%%%%%%%%%%%%%%%%%%%%%%%%%%%%%%%%%%%%%%%%%%%%%%%%%%%%%%%%%

Samea achieved several important results regarding abstract symmetric Segal algebras, as discussed in \cite{Samea}, which led to the following subsequent findings.
\begin{Lemma}\label{5}
	Let $A$ be a contractible Banach algebra and $B$ be a symmetric
	abstract Segal algebra with respect to $A.$ If $[B^2]$ is dense in
	$B$, then $B$ is essentially contractible.
\end{Lemma}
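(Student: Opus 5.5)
The plan is to reduce Lemma \ref{5} to Corollary \ref{22}(iii), or equivalently to Theorem \ref{1}, using the density hypothesis on $[B^2]$.

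First, since $[B^2]$ is dense in $B$ with respect to $\|\cdot\|_B$, we have
$$\overline{[B^2]}^{\|\cdot\|_B}=B.$$
Corollary \ref{22}(iii) asserts that $\overline{[B^2]}^{\|\cdot\|_B}$ is essentially contractible whenever $B$ is a symmetric abstract Segal algebra with respect to a contractible Banach algebra. This is exactly our setting, so $B$ itself is essentially contractible.

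Because that corollary is proved only by citing Theorem \ref{1} together with \cite[Theorem 3.5]{Samea}, I would also make the argument self-contained along the same route. The steps are as follows.

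\begin{enumerate}[(1)]
\item Being a symmetric abstract Segal algebra, $B$ is an ideal of $A$ with property $(\mathcal P)$.
\item If $B$ is closed in $A$, then density gives $B=A$. By the open mapping theorem the two norms are then equivalent, so $B$ is contractible and hence essentially contractible. We may therefore assume $B$ is non-closed, which is the setting of Theorem \ref{1}.
\item The hypotheses needed to invoke Theorem \ref{1} are that $B$ has an approximate identity which is also an approximate identity for $A$. Here I would use Samea's result \cite[Theorem 3.5]{Samea}. For a symmetric abstract Segal algebra $B$ over an algebra $A$ with a bounded approximate identity (a contractible $A$ is unital), $\overline{[B^2]}^{\|\cdot\|_B}$ has an approximate identity that works for both $A$ and $B$.
\end{enumerate}

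To see why such an identity should exist, let $(e_\alpha)\subseteq B$ with $e_\alpha\to 1_A$ in $\|\cdot\|_A$, which is possible since $B$ is dense in the unital algebra $A$. Property $(\mathcal P)$ gives
$$\|e_\alpha bc-bc\|_B\le \|e_\alpha b-b\|_A\,\|c\|_B\to 0, \qquad (b,c\in B),$$
and the same estimate holds on the right. So $(e_\alpha)$ is an approximate identity on $[B^2]$. By density of $[B^2]$ in $B$ (this is where the hypothesis enters), it is then an approximate identity on all of $B$, provided we control $\sup_\alpha\|e_\alpha\|_{B}$ or argue via the operators $b\mapsto e_\alpha b$ on $B$. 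By $(\mathcal P)$ these operators have norms at most $\|e_\alpha\|_A$, which is bounded. A $3\varepsilon$ argument then finishes the claim. Finally, $(e_\alpha)$ is an approximate identity for $A$ because it converges to $1_A$.

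With these hypotheses verified, Theorem \ref{1} applies, and every derivation from $B$ into a neo-unital $B$-bimodule is inner. The main obstacle is step (3): producing an approximate identity lying in $B$ that serves both $A$ and $B$. I expect the uniform bound on the multiplication operators coming from $(\mathcal P)$ to be the key point there, and otherwise to rely on \cite[Theorem 3.5]{Samea}.
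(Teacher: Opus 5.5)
Your proposal is correct and follows the paper's route. The paper's proof is just Corollary \ref{22}(iii) specialised via $B=\overline{[B^2]}^{\|\cdot\|_B}$, i.e.\ Theorem \ref{1} with the approximate identity and property $(\mathcal P)$ supplied by \cite[Theorem 3.5]{Samea}.

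Your by-hand verification of step (3) and your treatment of the closed case (which the paper skips, despite the non-closed hypothesis in Theorem \ref{1}) are sound additions. Step (3) is also easier than you make it: applying $(\mathcal P)$ with $a=e_\alpha-1_A$ gives $\|e_\alpha b-b\|_B,\ \|be_\alpha-b\|_B\le\|e_\alpha-1_A\|_A\|b\|_B\to0$ directly, so no density or $3\varepsilon$ argument is needed.
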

\begin{proof}
	Since $A$ is contractible, it is unital, and by \cite[Theorem
	3.5]{Samea}, $A$ has an approximate identity in $\overline{
		[B^2]}^{\|\cdot\|_B}$. Additionally, by \cite[Theorem 3.5]{Samea}
	symmetricality of $\overline{[B^2]}^{\|\cdot\|_B}$ implies that
	$\overline{[B^2]}^{\|\cdot\|_B}$ has the property $(\mathcal{P})$.
	Therefore, applying  Theorem \ref{1}, we conclude that $B=\overline{
		[B^2]}^{\|\cdot\|_B}$ is essentially contractible.
\end{proof}
\begin{Lemma}
	Let $A$ be a contractible Banach algebra and $I$ be a dense ideal in
	$A$ with the property $(\mathcal{P}).$ If $[I^2]$ is dense in $I$, then
	$I$ is essentially contractible.
\end{Lemma}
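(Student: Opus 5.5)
The plan is to reduce the lemma to Lemma \ref{5}. A dense ideal with property $(\mathcal{P})$ is exactly a symmetric abstract Segal algebra with respect to $A$, with constants $M$ and $C$ to be checked. Once that is done, Lemma \ref{5} (with the hypothesis that $[I^2]$ is dense in $I$) gives the conclusion. I will verify the Segal axioms one by one.

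First, $I$ is a dense left ideal of $A$ by hypothesis, and $(I,\|\cdot\|_I)$ is a Banach algebra by property $(\mathcal{P})$. Axiom (ii) of an abstract Segal algebra, $\|ab\|_I\le C\|a\|_A\|b\|_I$, holds with $C=1$ directly from $(\mathcal{P})$. Axiom (i), namely $\|b\|_A\le M\|b\|_I$, is the main obstacle, since $(\mathcal{P})$ does not state it explicitly. The argument is as follows.

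Since $A$ is contractible, it is unital, say with unit $e$; also $e\neq 0$. Consider the inclusion map $\iota:(I,\|\cdot\|_I)\to(A,\|\cdot\|_A)$. I would show that $\iota$ has closed graph and then apply the closed graph theorem. Suppose $b_n\to b$ in $\|\cdot\|_I$ and $b_n\to c$ in $\|\cdot\|_A$. For any $a\in I$, property $(\mathcal{P})$ gives
\[
\|b_na-ca\|_I\le \|b_n-c\|_A\|a\|_I\to0 .
\]
Since $I$ is a Banach algebra, $b_na\to ba$ in $\|\cdot\|_I$, so $ba=ca$ for every $a\in I$. Density of $I$ in $A$ and continuity of multiplication in $A$ then give $(b-c)x=0$ for all $x\in A$. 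Taking $x=e$ yields $b=c$. Hence $\iota$ is bounded, which is axiom (i) with $M=\|\iota\|$.

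Thus $I$ is a symmetric abstract Segal algebra with respect to the contractible algebra $A$, and $[I^2]$ is dense in $I$. Lemma \ref{5} applies and $I$ is essentially contractible.

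Alternatively, one can argue directly. By \cite[Theorem 3.5]{Samea}, $\overline{[I^2]}^{\|\cdot\|_I}=I$ carries an approximate identity that is also one for $A$. Then Theorem \ref{1} applies, provided $I$ is non-closed in $A$. If $I$ happened to be closed, density would force $I=A$, which is contractible and hence essentially contractible. So that case is trivial.
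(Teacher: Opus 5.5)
Your proof is correct and takes the paper's route: show that $I$ is a symmetric abstract Segal algebra with respect to $A$, then invoke Lemma~\ref{5}. The only difference is that you prove the missing bound $\|b\|_A\le M\|b\|_I$ yourself, via the closed graph theorem and the unit of $A$, whereas the paper simply cites \cite[Lemma 4.5]{Samea}. As an aside, since $A$ is unital with unit $e$, density gives some $u\in I$ with $\|e-u\|_A<1$, which is invertible, so in fact $I=A$ as sets (with an equivalent norm, by your bound and the open mapping theorem); hence the non-closed branch of your alternative argument never occurs, and the lemma is essentially immediate.
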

\begin{proof}
	Since $A$ is contractible, it is unital. Therefore, by \cite[Lemma
	4.5]{Samea} $I$ is a symmetric abstract Segal algebra. So by Lemma
	\ref{5}, $I$ is essentially contractible.
\end{proof}
\begin{cor}\label{symm}
	Every symmetric abstract Segal algebra with respect to a contractible Banach algebra with an approximate identity is essentially contractible.
\end{cor}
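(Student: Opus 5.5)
The plan is to reduce Corollary \ref{symm} to Lemma \ref{5}. Concretely, we show that for a symmetric abstract Segal algebra $B$ over a contractible $A$, the span $[B^2]$ is automatically dense in $(B,\|\cdot\|_B)$. Once density holds, Lemma \ref{5} gives the conclusion directly.

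First, since $A$ is contractible, it has a diagonal $u$ with $\pi_A(u)a=a$, and by the standard argument $\pi_A(u)$ is an identity $e$ for $A$. So the approximate identity hypothesis may be taken to be the constant net $(e)$, or more generally any approximate identity $(e_\alpha)$ of $A$. The real content is to produce, from the Segal structure, elements of $B$ acting as an approximate identity on $B$ in the $\|\cdot\|_B$-norm. Here I would use \cite[Theorem 3.5]{Samea}, as in the proof of Lemma \ref{5}: $A$ has an approximate identity $(f_\beta)$ lying in $\overline{[B^2]}^{\|\cdot\|_B}$.

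Given $b\in B$, I would write $b=\lim_\beta f_\beta b$, with the convergence to be checked in $\|\cdot\|_B$. Property $(\mathcal P)$ gives
\[
\|f_\beta b-b\|_B\le \|f_\beta b-f_\beta b'\|_B+\|f_\beta b'-b'\|_B+\|b'-b\|_B .
\]
I would choose $b'=eb''$ or $b'\in[AB]$ and estimate each term using the inequality $\|ab\|_B\le\|a\|_A\|b\|_B$. In particular, for $b\in B$ we have $b=eb$ and $\|f_\beta b-b\|_B=\|(f_\beta-e)b\|_B\le\|f_\beta-e\|_A\|b\|_B$. If $(f_\beta)$ converges to $e$ in the norm of $A$ (which is the case when $A$ is unital and we take the constant net, or when the approximate identity of $A$ can be taken in norm as in \cite{Samea}), then $f_\beta b\to b$ in $\|\cdot\|_B$. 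Since $f_\beta\in\overline{[B^2]}^{\|\cdot\|_B}$ and this closure is a left ideal-type subspace stable under right multiplication by $B$, each $f_\beta b$ lies in $\overline{[B^2]}^{\|\cdot\|_B}$. Hence $b\in\overline{[B^2]}^{\|\cdot\|_B}$, and $[B^2]$ is dense in $B$.

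Then Lemma \ref{5} applies and $B$ is essentially contractible. The main obstacle is the middle step: ensuring that the approximate identity supplied by \cite[Theorem 3.5]{Samea} converges to the unit in the $A$-norm, rather than only strongly. In a unital $A$ one can try to place $e$ itself, or elements norm-close to it, inside $\overline{[B^2]}$. If that fails, I would fall back on the pointwise estimate $\|f_\beta b-b\|_B$ via factorization $b=a b_0$ with $a\in A$, $b_0\in B$, which $(\mathcal P)$ and unitality make trivial ($a=e$).
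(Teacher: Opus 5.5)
Your argument works once you notice that the step you flag as the main obstacle is automatic. In a unital algebra every one-sided approximate identity converges to the unit in norm, because $f_\beta=f_\beta e\to e$ (or $f_\beta=ef_\beta\to e$). With that, property $(\mathcal P)$ and $eb=b$ give $\|f_\beta b-b\|_B\le\|f_\beta-e\|_A\|b\|_B\to0$. Also $f_\beta b\in B^2$ simply because $f_\beta,b\in B$. So $[B^2]$ is dense in $B$ and Lemma \ref{5} applies. Drop the parenthetical about the constant net and the fallback via $b=eb$: neither produces elements of $[B^2]$ unless you already know $e\in B$.

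The paper gives no separate proof. The corollary is meant to follow from Lemma \ref{5} by reading ``with an approximate identity'' as a hypothesis on $B$: if $(e_\alpha)$ is an approximate identity of $(B,\|\cdot\|_B)$, then $b=\lim_\alpha e_\alpha b\in\overline{[B^2]}^{\|\cdot\|_B}$, a one-line density argument. You instead attach the hypothesis to $A$, where it is redundant, and obtain density of $[B^2]$ from \cite[Theorem 3.5]{Samea} plus unitality. So you prove the statement with no assumption on $B$ at all.

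That extra generality has a simple explanation you are one step away from. Since $B$ is dense in $A$ and the invertibles of $A$ form an open set, $B$ contains an invertible element. Hence the left ideal $B$ contains $e$, and $B=A$. Then $\|a\|_B=\|ae\|_B\le\|a\|_A\|e\|_B$ by $(\mathcal P)$, so $\|\cdot\|_B$ is equivalent to $\|\cdot\|_A$ and $B$ is contractible outright.

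This bypasses both \cite[Theorem 3.5]{Samea} and Lemma \ref{5}, which is worth doing. Lemma \ref{5} is proved via Theorem \ref{1}, whose ``non-closed'' hypothesis can never hold when $A$ is unital and the approximate identity of the ideal is one for $A$. The same invertibility argument forces that ideal to be all of $A$.
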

%%%%%%%%%%%%%%%%%%%%%%%%%%%%%%%%%%%%%%%%%%%%%%%%%%%%%%%%%%%%%%%%%%%%%%%%%%%%%
At the end, the aim is to provide an example of an (infinite dimensional) essentially contractible Banach algebra using matrix algebras.
Let $A$ be a  Banach algebra and $n\in\Bbb N$. The vector space of all $n\times n$-matrices $(a_{ij})$ with entries from $A$, denoted by $M_n(A)$, is a Banach algebra when equipped with the norm
$$\|(a_{ij})\|_{M_n(A)}=\sum_{i,j=1}^n\|a_{i,j}\|_A.$$
\begin{Lemma}\label{23}
	Let $A$ be a Banach algebra, $B$ be a symmetric abstract
	Segal algebra with respect to $A$.
	Then $M_n(B)$ is a symmetric abstract Segal algebra with respect to
	$M_n(A)$.
\end{Lemma}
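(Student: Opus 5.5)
We verify directly the defining conditions of a symmetric abstract Segal algebra for $M_n(B)$ inside $M_n(A)$, equipped with the norm $\|(b_{ij})\|_{M_n(B)}=\sum_{i,j}\|b_{ij}\|_B$. The first step is to show that $(M_n(B),\|\cdot\|_{M_n(B)})$ is a Banach algebra. Completeness is immediate, since $M_n(B)$ is isomorphic as a Banach space to the $\ell^1$-sum of $n^2$ copies of the Banach space $B$. Submultiplicativity follows from
$$\|(b_{ij})(c_{ij})\|=\sum_{i,j}\Bigl\|\sum_k b_{ik}c_{kj}\Bigr\|_B\le\sum_{i,j,k}\|b_{ik}\|_B\|c_{kj}\|_B\le\Bigl(\sum_{i,k}\|b_{ik}\|_B\Bigr)\Bigl(\sum_{k,j}\|c_{kj}\|_B\Bigr).$$

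Next we check the ideal and density conditions. $M_n(B)$ is a two-sided ideal of $M_n(A)$ because each entry of $(a_{ij})(b_{ij})$ is $\sum_k a_{ik}b_{kj}$. Each summand lies in $B$, since $B$ is an ideal of $A$; indeed property $(\mathcal P)$ asserts that $B$ is a two-sided ideal. The product $(b_{ij})(a_{ij})$ is handled in the same way. For density, take $(a_{ij})\in M_n(A)$. Since $B$ is dense in $A$, choose $b_{ij}\in B$ with $\|a_{ij}-b_{ij}\|_A<\varepsilon/n^2$. Then $\|(a_{ij})-(b_{ij})\|_{M_n(A)}<\varepsilon$.

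The remaining step is to verify the norm inequalities. For condition (i), $\|(b_{ij})\|_{M_n(A)}=\sum\|b_{ij}\|_A\le M\sum\|b_{ij}\|_B=M\|(b_{ij})\|_{M_n(B)}$. For property $(\mathcal P)$ we use the same expansion as in the first step, now with the mixed estimate $\|a_{ik}b_{kj}\|_B\le\|a_{ik}\|_A\|b_{kj}\|_B$. This gives
$$\|(a_{ij})(b_{ij})\|_{M_n(B)}\le\sum_{i,j,k}\|a_{ik}\|_A\|b_{kj}\|_B\le\|(a_{ij})\|_{M_n(A)}\|(b_{ij})\|_{M_n(B)}.$$
The product in the opposite order is estimated symmetrically. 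Condition (ii), with $C=1$, follows from the same estimate.

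No step is a genuine obstacle. The only point needing care is that the $\ell^1$-type matrix norm, rather than an operator-type norm, is exactly what makes the constants in property $(\mathcal P)$ come out equal to $1$. Another choice of norm would introduce factors of $n$, and those factors would break $(\mathcal P)$ as defined here with constant one.
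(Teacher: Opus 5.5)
Your proof is correct and follows the same route as the paper: you expand matrix products entrywise under the $\ell^1$-type norm and apply $\|b\|_A\le M\|b\|_B$ together with the mixed estimates $\|ab\|_B,\|ba\|_B\le\|a\|_A\|b\|_B$ from property $(\mathcal P)$, and you additionally verify completeness, submultiplicativity, the two-sided ideal property and density, which the paper treats as obvious. Your closing remark is overstated, since the row-sum norm $\max_i\sum_j\|a_{ij}\|$ also gives constant one in $(\mathcal P)$, but it plays no role in the argument.
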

\begin{proof}
	It is obvious that $M_n(B)$ is a dense left ideal in $M_n(A)$.
	First, let $a=(a_{i,j})$ and $b=(b_{i,j})$ be in $M_n(A)$ and $M_n(B)$, respectively.
	Then
	$$\|(b_{ij})\|_A=\sum_{i,j=1}^n\|b_{ij}\|_A\leq M\sum_{i,j=1}^n\|b_{ij}\|_B=M\|(b_{ij})\|_B$$
	Finally, let $(a_{ij})(b_{ij})=(c_{ik})$, where $c_{ik}=\sum_{j}a_{ij}b_{jk}$. Using the submultiplicativity property of $B$ we have
	$$\|c_{ik}\|_{B}=\left\|\sum_{j=1}^na_{ij}b_{jk}\right\|_B\leq\sum_{j=1}^n\|a_{ij}\|_A\|b_{jk}\|_B.$$
	Summing over all $i, k$, we obtain
	$$\|(c_{ik})\|_{M_n(B)}=\sum_{i,k=1}^n\|c_{ik}\|_B\leq\sum_{i,k=1}^n\sum_{j=1}^n\|a_{ij}\|_A\|b_{jk}\|_B.$$
	Reorganizing the sums gives:
	$$\|(c_{ik})\|_{M_n(B)}\leq\left(\sum_{i,j=1}^n\|a_{ij}\|_A\right)\left(\sum_{j,k=1}^n\|b_{jk}\|_B\right)=\|(a_{ij})\|_{M_n(A)}\|(b_{ij})\|_{M_n(B)}$$
	In a similar way we obtain
	$$\|(b_{ij})(a_{ij})\|_{M_n(B)}\leq\|(a_{ij})\|_{M_n(A)}\|(b_{ij})\|_{M_n(B)}$$
\end{proof}
In the sequel, we are prepared to present the example that was promised.
\begin{Example} Let $A$ be a contractible Banach algebra, $B$ be a symmetric abstract Segal algebra with respect to $A$ and $n\in\Bbb N$.
	By Lemma \ref{23}, $M_n(B)$ is a symmetric abstract
	Segal algebra with respect to $M_n(A)$. Using \cite[Lemma 3.5]{Samea} $
	\overline{[M_n(B)^2]}^{\|\cdot\|_{M_n(B)}}$ is a symmetric abstract Segal
	algebra with respect to $M_n(A),$ which has an approximate
	identity $(e_{\alpha})$ for $M_n(A)$. Moreover, by \cite[Theorem 3.2]{ess} $M_n(A)$ is contractible, Corollary \ref{symm} implies that $\overline{[M_n(B)^2]}^{\|\cdot\|_{M_n(B)}}$ is essentially contractible.
	Note that $
	\overline{[M_n(B)^2]}^{\|\cdot\|_{M_n(B)}}$ is not contractible; else it should have a unit element,
	namely $E$. Hence  we have
	$e_{\alpha}=e_{\alpha}E\rightarrow E$. Therefore,
	$e_{\alpha}I\rightarrow I$ and $e_{\alpha}I\rightarrow EI=E$, implies that $E=I$, where $I$ is the identity matrix. This would imply
	$M_n(A)=\overline{[M_n(B)^2]}^{\|\cdot\|_{M_n(B)}}\subseteq M_n(B)$, which is a contradiction.
\end{Example}

%%%%%%%%%%%%%%%%%%%%%%%%%%%%%%%%%%%%%%%%%%%%%%%%%%%%%%%%%%%%%%%%%%%%%%%%%%%%%%%%%%%%%%%%%%%%%%%%%%%%%%%%%%%%%%%%%%%%%%%%%%%%%%%%%%%%%%%%%%%%%%%%%%%%%%%%%%

\begin{small}

\end{small}
\end{document}